\newcommand{\supp}{\mathop{\mathrm{supp}}}
\theoremstyle{plain}% default
\newtheorem{thm}{Theorem}[section]
\newtheorem{lem}[thm]{Lemma}
\newtheorem{prop}[thm]{Proposition}
\newtheorem{cor}[thm]{Corollary}
\theoremstyle{definition}
\theoremstyle{remark}
\newtheorem{rem}[thm]{Remark}
\begin{document}

\title{
Dual Univariate Interpolatory Subdivision of Every Arity:\\ Algebraic Characterization and Construction
}

\author[unibo]{Lucia Romani\corref{cor1}}
\ead{lucia.romani@unibo.it}
\cortext[cor1]{Corresponding author}
\author[unibo]{Alberto Viscardi}
\ead{alberto.viscardi@unibo.it}

\address[unibo]{Dipartimento di Matematica, Universit\`a di Bologna, Piazza di Porta San Donato 5, Bologna, Italy}

\date{}

\begin{abstract}
	
	A new class of univariate stationary interpolatory subdivision schemes of dual type is presented. As opposed to classical primal interpolatory schemes, these new schemes have masks with an even number of elements and are not step-wise interpolants. A complete algebraic characterization, which covers every arity, is given in terms of identities of trigonometric polynomials associated to the schemes. This characterization is based on a necessary condition for refinable functions to have prescribed values at the nodes of a uniform lattice, as a consequence of the Poisson summation formula. A strategy for the construction is then showed, alongside meaningful examples for applications that have comparable or even superior properties, in terms of regularity, length of the support and/or polynomial reproduction, with respect to the primal counterparts.

	\bigskip
	\noindent{\bf Classification (MSC2010): 65D05; 65D17; 41A05}

	\begin{keyword}
		refinable functions  \sep interpolation \sep univariate subdivision \sep general arity \sep dual schemes
	\end{keyword}

\end{abstract}

\maketitle

%%%%%%%%%%%%%%%%%%%%%%%%%%%%%%%%%%%%%%%%%%%%%%%%%%%%%%%%%%%%%%%%%%%%%%%%%%%%%%%%%%%%%%%%%%%%%%%%%
\section{Introduction}
%%%%%%%%%%%%%%%%%%%%%%%%%%%%%%%%%%%%%%%%%%%%%%%%%%%%%%%%%%%%%%%%%%%%%%%%%%%%%%%%%%%%%%%%%%%%%%%%%

Subdivision schemes are iterative methods for the construction of curves and surfaces exploited in various applications,
ranging from computer-aided geometric design, computer graphics and animation (see, e.g., \cite{MR2415757,Warren:2001:SMG:580358}) to the construction of wavelets and frames (see, e.g.,  \cite{MR1968118,MR1971300}).
Here we focus on univariate stationary subdivision \cite{MR1079033}. Convergent subdivision schemes are characterized by a compactly supported function $\varphi\in\mathcal{C}^0(\mathbb{R})$, called \emph{basic limit function}, which satisfies a  \emph{refinement equation} of the type
\begin{equation} \label{eq:ref_eq}
	\varphi(x)\;=\;\sum_{k\in\mathbb{Z}}\;a_k\;\varphi(mx-k+\tau),\quad x\in\mathbb{R},
\end{equation}
where $m\in\mathbb{N}\setminus\{1\}$ is the \emph{arity}, $\mathbf{a}=\{a_k\in\mathbb{R}\}_{k
\in\mathbb{Z}}$, is a compactly supported sequence of real values called \emph{mask} and $\tau\in[0,1)\cap\mathbb{Q}$ is a shift parameter. If a compactly supported function $\varphi$ satisfies \eqref{eq:ref_eq}, we can always consider it to have $\supp(\varphi)=[-s,s]$, $s>0$, changing the indexing and the shift parameter $\tau$ properly. In particular, denoting
\[
	k_\ell\;=\;\min\{\;k\in\mathbb{Z}\;:\;a_k\neq0\;\}\quad\textrm{ and }\quad k_r\;=\;\max\{\;k\in\mathbb{Z}\;:\;a_k\neq0\;\},
\]
it is easy to prove that $\varphi$ has a symmetric support if and only if one of the following is true:
\begin{equation} \label{eq:pd}
	\begin{array}{l}
		(p)\qquad \tau=0 \quad\textrm{ and }\quad k_\ell=-k_r; \\ \\
		(d)\qquad \tau=\frac{1}{2} \quad\textrm{ and }\quad k_\ell=1-k_r.
	\end{array}	
\end{equation}
Because of \eqref{eq:pd}, univariate subdivision schemes are divided in two major families: primal schemes, satisfying $(p)$, and dual schemes, satisfying $(d)$.\\
By constructing the subdivision symbol
\begin{equation} \label{eq:symbol}
	A(z)\;=\;\frac{1}{m}\;\sum_{k=k_\ell}^{k_r}\;a_k\;z^k,\quad z\in\mathbb{C},\;|z|=1,
\end{equation}
associated to the mask $\mathbf{a}=\{a_k\in\mathbb{R}\}_{k=k_\ell,\ldots,k_r}$, it is well-known (see, e.g., \cite{MR2775138,MR2843037,MR3071114}) that the shift parameter $\tau$ satisfies
$$\tau\;=\;A'(1).$$
The target of this work are interpolatory schemes, i.e., schemes with a basic limit function $\varphi$ satisfying
\begin{equation} \label{eq:interp}
	\varphi(n)\;=\;\delta_{0,n},\quad n\in\mathbb{Z}.
\end{equation}
Within the class of interpolatory schemes, primal ones are characterized by a simple polynomial equation which involves the \emph{sub-symbols} of the scheme, i.e., the Laurent polynomials
\begin{equation} \label{eq:sub_sym_A}
	A_{n}(z)\;=\;\frac{1}{m}\;\sum_{k\in\mathbb{Z}}\;a_{mk+n}\;z^{mk+n}, \quad n=0, \ldots, m-1 \quad \hbox{satisfying} \quad A(z)\;=\;\sum_{n=0}^{m-1}\;A_n(z).
\end{equation}
In particular, a convergent subdivision scheme is primal interpolatory if and only if \cite{MR1397613}
\begin{equation} \label{eq:primal_interp}
	A_{0}(z)\;\equiv\;\frac{1}{m},\quad\forall z\in\mathbb{C},\;|z|=1.
\end{equation}
Due to \eqref{eq:primal_interp}, primal interpolatory schemes have the so called \emph{stepwise interpolation property}, i.e., at each subdivision step they maintain the data of the previous one.\\
Beside the use of primal interpolatory schemes, there exist other approaches to interpolate points via subdivision, such as those that apply an approximating scheme after suitably preprocessing the data to be interpolated (see, e.g., \cite{Deng2010137,MR3921224,Zheng2006301}).
However, before \cite{LUCIA}, none of the existing approaches took into account the possibility of constructing a native dual interpolatory scheme which does not have the property of retaining the initial data at each iteration,
but achieves the interpolation in the sense that the initial data are still preserved in the limit function.
All dual subdivision schemes we can find in literature are indeed not interpolatory \cite{MR3071114},
except for the family of dual quaternary schemes introduced in \cite{LUCIA}
and for the class of dual $(2n)$-point subdivision schemes proposed in \cite{DENG2019344}, which is shown to possess the interpolation property only when $n$ tends to infinity, i.e., when the subdivision mask has infinite length.\\
To fill this theoretical gap in the literature, in this paper we investigate dual interpolatory schemes with finite masks  and present a complete algebraic characterization of their symbols, which covers every arity.
Additionally, for any arbitrary arity $m$ greater than $2$, this characterization is used as a general constructive method
to produce a great amount of new interpolatory schemes with different features.
In fact, the method we propose allows the user to tune a good amount of degrees of freedom: the arity $m$, the length of the mask $\mathbf{a}$, the desired degree of polynomial reproduction and some samples of the resulting basic limit function $\varphi$.\\

The remainder of the paper is organized as follows. In Section \ref{sec:nec_cond} we state a necessary condition for a basic limit function to have prescribed values at lattices of the form \(\mathbb{Z}/T\), for $\tau\in[0,1)\cap\mathbb{Q}$ and \(T\in\mathbb{N}\setminus\{0\}\) satisfying \(\tau T\in\mathbb{N}\). In Section \ref{sec:interp_char}, we show an explicit algebraic characterization of dual interpolatory schemes in terms of polynomial equalities involving sub-symbols and other polynomials related to the evaluation of the basic limit function at $\mathbb{Z}/2$. Exploiting this fact we are able to construct, in Section \ref{sec:dual_interp}, new and interesting interpolatory schemes never considered in the literature, which possess
comparable or even superior properties, in terms of
regularity, length of the support and/or polynomial reproduction, with respect to their primal counterparts.
Conclusions are drawn in Section \ref{sec:concl}.
%\begin{proof}
%	Due to the symmetry of $\varphi$, there exists $s>0$, such that $\supp(\varphi)=[-s,s]$. From the refinement equation \eqref{eq:ref_eq} follows that
%	\[
%	[-s,s]\;=\;\frac{1}{m}\;\bigcup_{k=k_\ell}^{k_r}\;[k-s-\tau,k+s-\tau]\;=\;\frac{1}{m}\;[k_\ell-s-\tau,k_r+s-\tau].
%	\]
%	Thus,
%	\[
%	k_\ell-s-\tau\;=\;-k_r-s+\tau\quad\Longrightarrow\quad k_\ell\;=\;2\tau-k_r.
%	\]
%	Since $k_\ell\in\mathbb{Z}$, it can only be $\tau\in\{0,1/2\}$, which leads to $(p)$ or $(d)$.
%\end{proof}

%%%%%%%%%%%%%%%%%%%%%%%%%%%%%%%%%%%%%%%%%%%%%%%%%%%%%%%%%%%%%%%%%%%%%%%%%%%%%%%%%%%%%%%%%%%%%%%%%%%
\section{Necessary Algebraic Condition for Refinability} \label{sec:nec_cond}
%%%%%%%%%%%%%%%%%%%%%%%%%%%%%%%%%%%%%%%%%%%%%%%%%%%%%%%%%%%%%%%%%%%%%%%%%%%%%%%%%%%%%%%%%%%%%%%%%%%

We start proving a general necessary condition for a function $\varphi$ to be the basic limit function of a convergent subdivision scheme, having prescribed values over the lattice $\mathbb{Z}/T$, $T>0$. The proof exploits the notation
\begin{equation} \label{eq:Fourier_transform}
	\widehat{f}(\omega)\;=\;\int_\mathbb{R}\;f(x)\;e^{-2\pi i x \omega}\;dx,\quad \omega\in\mathbb{R}
\end{equation}
to refer to the Fourier transform of a function $f\in L^1(\mathbb{R})$, and
is based on a fundamental result of harmonic analysis, i.e. the \emph{Poisson summation formula} in its generalized form \cite{MR1420504}, which states:\\
if a function $f:\mathbb{R}\rightarrow\mathbb{R}$, $f\in L^1(\mathbb{R})$  satisfies
\begin{equation} \label{eq:PSF_hyp}
	|f(x)|\;+\;|\widehat{f}(x)|\;\leq\;C(1+|x|)^{-1-\epsilon},\quad x\in\mathbb{R},
\end{equation}
for some $C,\epsilon>0$, then,
\begin{equation} \label{eq:PSF}
	\sum_{n\in\mathbb{Z}}\;\widehat{f}(\omega+nT)\;=\;\frac{1}{T}\;\sum_{n\in\mathbb{Z}}\;f\left(\frac{n}{T}\right)\;e^{-2\pi i \omega \frac{n}{T}},\quad \omega\in\mathbb{R},\;T>0.
\end{equation}

\begin{rem}
	Condition \eqref{eq:PSF_hyp} is easily satisfied by continuous compactly supported functions such as the basic limit functions of convergent subdivision schemes.
\end{rem}

\begin{thm} \label{thm:alg_char}
	Let \(\varphi\) be the basic limit function of a convergent subdivision scheme of arity $m\in\mathbb{N}\setminus\{0,1\}$, compactly supported mask $\mathbf{a}=\{a_\ell\}_{\ell\in\mathbb{Z}}$ and sub-symbols $A_k(z)$, $k\in\mathbb{Z}$, satisfying \eqref{eq:ref_eq} with $\tau\in[0,1)\cap\mathbb{Q}$. Then, for every $T\in\mathbb{N}\setminus\{0\}$ such that \(\tau T\in\mathbb{N}\),
%	\begin{equation} \label{eq:T}
%		T\;=\;\min\{\;n\in\mathbb{N}\;:\;n\tau\in\mathbb{N}\;\},
%	\end{equation}
	the following polynomial identity holds
	%has prescribed values at \(\mathbb{Z}/T\) if and only if
	\begin{equation} \label{eq:sym_char}
		\sum_{\gamma=0}^{mT-1}\;\Phi_{T,\gamma}(z^m)\;=\;m\;z^{-\tau T}\;\sum_{\beta=0}^{m-1}\;\sum_{\substack{\gamma=0\\\gamma+\beta T\equiv_m \tau T}}^{mT-1}\;A_{\beta}(z^T)\;\Phi_{T,\gamma}(z),
	\end{equation}
	where
	\begin{equation} \label{eq:sub_sym_P}
		\Phi_{T,n}(z)\;=\;\frac{1}{T}\;\sum_{k\in\mathbb{Z}}\;\varphi\left(mk+\frac{n}{T}\right)\;z^{mTk+n},\quad n\in\mathbb{Z},
	\end{equation}
	with \(z\;=\;e^{-2\pi i \frac{\omega}{T}}\).
\end{thm}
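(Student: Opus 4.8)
The plan is to pass to the Fourier domain and to apply the Poisson summation formula \eqref{eq:PSF} twice: once to recognize both sides of \eqref{eq:sym_char} as lattice sums of $\widehat{\varphi}$, and a second time after the refinement relation has been inserted. First I would Fourier-transform \eqref{eq:ref_eq}; a change of variables in \eqref{eq:Fourier_transform} yields the functional equation
\begin{equation*}
	\widehat{\varphi}(\xi)\;=\;e^{2\pi i\tau\xi/m}\,A\!\left(e^{-2\pi i\xi/m}\right)\widehat{\varphi}\!\left(\frac{\xi}{m}\right),
\end{equation*}
with $A$ the symbol \eqref{eq:symbol}. Then I would observe that, upon setting $z=e^{-2\pi i\omega/T}$ and grouping the integers $j$ by their residue $\gamma$ modulo $mT$, the definition \eqref{eq:sub_sym_P} gives $\sum_{\gamma=0}^{mT-1}\Phi_{T,\gamma}(w)=\frac{1}{T}\sum_{j\in\mathbb{Z}}\varphi(j/T)\,w^{j}$. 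Choosing $w=z^{m}$ and applying \eqref{eq:PSF} at frequency $m\omega$ identifies the left-hand side of \eqref{eq:sym_char} as $\sum_{n\in\mathbb{Z}}\widehat{\varphi}(m\omega+nT)$.

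The core of the argument is to expand each $\widehat{\varphi}(m\omega+nT)$ through the functional equation and to re-sum. Doing so produces the prefactor $e^{2\pi i\tau\omega}=z^{-\tau T}$, a symbol factor $A(z^{T}e^{-2\pi i nT/m})$, and a translated argument $\widehat{\varphi}(\omega+nT/m)$. Writing $n=m\ell+\beta$ with $\beta\in\{0,\dots,m-1\}$, the hypothesis $\tau T\in\mathbb{N}$ is precisely what makes the exponentials $e^{2\pi i\tau nT/m}$ and $e^{-2\pi i nT/m}$ depend only on $\beta$, since the $\ell$-part contributes $e^{2\pi i\tau\ell T}=1$ and $e^{-2\pi i\ell T}=1$; the $\ell$-sum then acts only on $\widehat{\varphi}$. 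A second application of \eqref{eq:PSF} to $\sum_{\ell}\widehat{\varphi}(\omega+\beta T/m+\ell T)$ turns it into $\sum_{\gamma=0}^{mT-1}e^{-2\pi i\beta\gamma/m}\Phi_{T,\gamma}(z)$, because $e^{-2\pi i\beta j/m}=e^{-2\pi i\beta\gamma/m}$ whenever $j\equiv\gamma\pmod{mT}$. At this stage one reaches
\begin{equation*}
	\sum_{\gamma=0}^{mT-1}\Phi_{T,\gamma}(z^{m})\;=\;z^{-\tau T}\sum_{\beta=0}^{m-1}e^{2\pi i\tau\beta T/m}\,A\!\left(z^{T}e^{-2\pi i\beta T/m}\right)\sum_{\gamma=0}^{mT-1}e^{-2\pi i\beta\gamma/m}\,\Phi_{T,\gamma}(z).
\end{equation*}

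The final and most delicate step, which I expect to be the main obstacle, is to convert the full symbol evaluated at the rotated nodes $z^{T}e^{-2\pi i\beta T/m}$ into the sub-symbols $A_{\beta}(z^{T})$ and to produce the congruence constraint. Using the polyphase (root-of-unity) identity $A(e^{-2\pi i\beta T/m}z^{T})=\sum_{k=0}^{m-1}e^{-2\pi i\beta Tk/m}A_{k}(z^{T})$, which follows directly from \eqref{eq:sub_sym_A} and $A_k(\eta z)=\eta^{k}A_k(z)$ for $\eta^{m}=1$, and interchanging the $\beta$- and $k$-sums, the inner geometric sum $\sum_{\beta=0}^{m-1}e^{2\pi i\beta(\tau T-\gamma-Tk)/m}$ equals $m$ when $\gamma+kT\equiv\tau T\pmod m$ and $0$ otherwise. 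This simultaneously supplies the factor $m$ and selects exactly the indices with $\gamma+kT\equiv_{m}\tau T$, giving \eqref{eq:sym_char} after renaming $k$ as $\beta$. Throughout, the interchanges of summation and the two uses of \eqref{eq:PSF} are legitimate because $\varphi$ is continuous and compactly supported, so every lattice sum is finite and the decay hypothesis \eqref{eq:PSF_hyp} holds as noted in the preceding remark; the careful bookkeeping of the residues modulo $m$ and modulo $mT$, together with their interplay with $\tau T\in\mathbb{N}$, is where the proof demands the most attention.
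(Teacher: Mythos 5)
Your proposal is correct and follows essentially the same route as the paper's proof: the Fourier-side refinement equation, a splitting of the lattice sum into residue classes modulo $m$, two applications of the Poisson summation formula to recognize the sums $\sum_{\gamma}\Phi_{T,\gamma}$, and the final roots-of-unity orthogonality $\sum_{\beta=0}^{m-1}e^{2\pi i\beta(\tau T-\gamma-Tk)/m}\in\{m,0\}$ producing both the factor $m$ and the congruence constraint. The only cosmetic differences are the order of operations (you apply PSF before inserting the polyphase decomposition, the paper does the reverse) and that you invoke the identity $A_k(\eta z)=\eta^k A_k(z)$ abstractly where the paper re-derives it from the mask coefficients; these are reorganizations, not a different argument.
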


\begin{proof}
	It is well known that, on the Fourier side, the refinement equation \eqref{eq:ref_eq} reads as
	\begin{equation} \label{eq:ref_eq_Fourier}
		\widehat{\varphi}(\omega)\;=\;e^{2\pi i \frac{\omega}{m} \tau}a\left(\frac{\omega}{m}\right)\;\widehat{\varphi}\left(\frac{\omega}{m}\right),
	\end{equation}
	where
	\begin{equation} \label{eq:sym}
		a(\omega)\;=\;\frac{1}{m}\;\sum_{k\in\mathbb{Z}}\;a_k\;e^{-2\pi i \omega k},
	\end{equation}
	is the \emph{symbol} associated to the refinement equation \eqref{eq:ref_eq}.
	Then, for every \(n\in\mathbb{Z}\), we get
	\[
		\widehat{\varphi}(m\omega+nT)\;=\;e^{2\pi i \left(\omega+\frac{nT}{m}\right)\tau}\;a\left(\omega+\frac{nT}{m}\right)\;\widehat{\varphi}\left(\omega+\frac{nT}{m}\right).
	\]
	Summing over $n\in\mathbb{Z}$, we obtain
	\[
		\begin{array}{rcl}
			\sum_{n\in\mathbb{Z}}\;\widehat{\varphi}(m\omega+nT)&=&\sum_{n\in\mathbb{Z}}\;e^{2\pi i \left(\omega+\frac{nT}{m}\right)\tau}\;a\left(\omega+\frac{nT}{m}\right)\;\widehat{\varphi}\left(\omega+\frac{nT}{m}\right) \\ \\
			&=& \sum_{\alpha=0}^{m-1}\;\sum_{h\in\mathbb{Z}}\;e^{2\pi i \left(\omega+hT+\frac{\alpha T}{m}\right)\tau}\;a\left(\omega+h T+\frac{\alpha T}{m}\right)\;\widehat{\varphi}\left(\omega+h T+\frac{\alpha T}{m}\right),
		\end{array}
	\]
	where we set $n=mh+\alpha$. Since $\tau T\in\mathbb{Z}$, we have
	\[
		\begin{array}{rcl}
			e^{2\pi i \left(\omega+hT+\frac{\alpha T}{m}\right)\tau}\;a\left(\omega+h T+\frac{\alpha T}{m}\right)&=&\frac{e^{2\pi i \left(\omega+\frac{\alpha T}{m}\right)\tau}}{m}\;\sum_{k\in\mathbb{Z}}\;a_k\;e^{-2\pi i \left(\omega+h T+\frac{\alpha T}{m}\right)k} \\ \\
			&=& \frac{z^{-\tau T}\;e^{2\pi i \frac{\alpha}{m} \tau T}}{m}\;\sum_{k\in\mathbb{Z}}\;e^{-2\pi i \frac{\alpha}{m}kT}\;a_k\;\left(e^{-2\pi i \frac{\omega}{T}}\right)^{kT}\\ \\
			&\underset{k=mn+\beta}{=}& \frac{z^{-\tau T}\;e^{2\pi i \frac{\alpha}{m} \tau T}}{m}\;\sum_{\beta=0}^{m-1}\;e^{-2\pi i \frac{\alpha}{m}\beta T}\;\sum_{n\in\mathbb{Z}}\;a_{mn+\beta}\;\left(e^{-2\pi i \frac{\omega}{T}}\right)^{(mn+\beta)T}\\ \\
			&=& z^{-\tau T}\;e^{2\pi i \frac{\alpha}{m} \tau T}\;\sum_{\beta=0}^{m-1}\;e^{-2\pi i \frac{\alpha}{m}\beta T}\;A_{\beta}(z^T).
		\end{array}
	\]
	Thus,
	\begin{equation} \label{eq:step1}
		\sum_{n\in\mathbb{Z}}\;\widehat{\varphi}(m\omega+nT)\;=\; z^{-\tau T}\;\sum_{\alpha=0}^{m-1}\;e^{2\pi i \frac{\alpha}{m} \tau T}\;\left(\;\sum_{\beta=0}^{m-1}\;e^{-2\pi i \frac{\alpha}{m}\beta T}\;A_{\beta}(z^T)\;\right)\;\sum_{h\in\mathbb{Z}}\;\widehat{\varphi}\left(\omega+h T+\frac{\alpha T}{m}\right).
	\end{equation}
	At this point we are ready to apply Poisson summation formula to both sides of \eqref{eq:step1} obtaining, for the left-hand-side,
	\begin{equation} \label{eq:step1a}
		\begin{array}{rcl}
			\sum_{n\in\mathbb{Z}}\;\widehat{\varphi}(m\omega+nT)&=&\frac{1}{T}\;\sum_{n\in\mathbb{Z}}\;\varphi\left(\frac{n}{T}\right)\;\left(e^{-2\pi i \frac{\omega}{T}m}\right)^n\\ \\
			&\underset{n=mTk+\gamma}{=}& \sum_{\gamma=0}^{mT-1}\;\frac{1}{T}\;\sum_{k\in\mathbb{Z}}\;\varphi\left(mk+\frac{\gamma}{T}\right)\;(z^m)^{mTk+\gamma}\\ \\
			&=&\sum_{\gamma=0}^{mT-1}\;\Phi_{T,\gamma}(z^m)
		\end{array}
	\end{equation}
	and, for the right-hand-side,
	\begin{equation} \label{eq:step1b}
		\begin{array}{rcl}
			\sum_{h\in\mathbb{Z}}\;\widehat{\varphi}\left(\omega+h T+\frac{\alpha T}{m}\right)&=&\frac{1}{T}\;\sum_{h\in\mathbb{Z}}\;\varphi\left(\frac{h}{T}\right)\;e^{-2 \pi i \left(\omega+\frac{\alpha T}{m}\right)\frac{h}{T}} \\ \\
			&\underset{h=mTn+\gamma}{=}& \sum_{\gamma=0}^{mT-1}\;\frac{1}{T}\;\sum_{n\in\mathbb{Z}}\;\varphi\left(mn+\frac{\gamma}{T}\right)\;e^{-2 \pi i \left(\omega+\frac{\alpha T}{m}\right)\frac{mTn+\gamma}{T}} \\ \\
			&=&
			\sum_{\gamma=0}^{mT-1}\;e^{-2\pi i \frac{\alpha}{m} \gamma}\;\Phi_{T,\gamma}(z).
		\end{array}
	\end{equation}
	Combining \eqref{eq:step1} with \eqref{eq:step1a} and \eqref{eq:step1b} we obtain
	\[
		\begin{array}{rcl}
			\sum_{\gamma=0}^{mT-1}\;\Phi_{T,\gamma}(z^m)&=&z^{-\tau T}\;\sum_{\alpha=0}^{m-1}\;e^{2\pi i \frac{\alpha}{m} \tau T}\;\left(\;\sum_{\beta=0}^{m-1}\;e^{-2\pi i \frac{\alpha}{m}\beta T}\;A_{\beta}(z^T)\;\right)\;\left(\;\sum_{\gamma=0}^{mT-1}\;e^{-2\pi i \frac{\alpha}{m} \gamma}\;\Phi_{T,\gamma}(z)\;\right) \\ \\
			&=&z^{-\tau T}\;\sum_{\beta=0}^{m-1}\;\sum_{\gamma=0}^{mT-1}\;A_{\beta}(z^T)\;\Phi_{T,\gamma}(z)\;\sum_{\alpha=0}^{m-1}\;e^{-2\pi i \frac{\alpha}{m}(\gamma+\beta T - \tau T)}.
		\end{array}	
	\]
	Since
	\[
		\sum_{\alpha=0}^{m-1}\;e^{-2\pi i \frac{\alpha}{m}(\gamma+\beta T - \tau T)}\;=\;
		\left\{\begin{array}{cl}
			m,&\textrm{ if } \gamma+\beta T\equiv_m\tau T,\\ \\
			0,&\textrm{ otherwise,}
		\end{array}\right.
	\]
we finally arrive at \eqref{eq:sym_char}.
	%in the end
%	\[
%			\sum_{\gamma=0}^{mT-1}\;\Phi_{T,\gamma}(z^m)\;=\; m\;z^{-\tau T}\;\sum_{\beta=0}^{m-1}\;\sum_{\substack{\gamma=0\\\gamma+\beta T\equiv_m \tau T}}^{mT-1}\;A_{\beta}(z^T)\;\Phi_{T,\gamma}(z).		
%	\]
\end{proof}

\begin{rem}
	For \(m=2,\;\tau=0,\;T=1\), the identity \eqref{eq:sym_char}
%	\[
%		\sum_{\gamma=0}^{1}\;\Phi_{1,\gamma}(z^2)\;=\;2\;\sum_{\substack{\beta,\gamma=0\\\gamma+\beta \equiv_2 0}}^{1}\;A_{\beta}(z)\;\Phi_{1,\gamma}(z)\;=\;2\;A_{0}(z)\;\Phi_{1,0}(z)\;+\;2\;A_{1}(z)\;\Phi_{1,1}(z),
%	\]
%	which is
	is equivalent to the algebraic condition stated in \cite{MR1790328}, equation $(2.7)$, for a binary refinable function to have prescribed values at the integers,
% , where
%	\[A_0\;=\;2\;A_{0}(z),\quad A_1\;=\;2\;A_{1}(z),\quad B_1\;=\;\Phi_{1,0}(z),\quad B_0\;=\;\Phi_{1,1}(z)\quad\textrm{and}\quad B\;=\;\sum_{\gamma=0}^{1}\;\Phi_{1,\gamma}(z^2).\]
	thus it can be seen as a generalization of it.
\end{rem}

\begin{rem} \label{rem:min_T}
	In Theorem \ref{thm:alg_char}, once \(m\) and \(\tau\) are fixed, the bigger is \(T\), the more evaluations of \(\varphi\) we need to take into account. In particular, since the aim is to construct a refinable function given some of its values, it makes sense to consider
	\begin{equation} \label{eq:min_T}
		T\;=\;\min\{\;k\in\mathbb{N}\setminus\{0\}\;:\;\tau k\in\mathbb{N}\;\}.
	\end{equation}
	Indeed, if there are no trigonometric polynomials \(\{A_{\beta}\}_{\beta=0}^{m-1}\) satisfying \eqref{eq:sym_char} with \(T\) as in \eqref{eq:min_T} for a fixed set of values \(\{\varphi(k/T)\}_{k\in\mathbb{Z}}\), there is no hope about the existence of solutions considering \(kT\), \(k\in\mathbb{N}\setminus\{0,1\}\), instead of \(T\), since \(\mathbb{Z}/T\subset\mathbb{Z}/(kT)\).
\end{rem}

%%%%%%%%%%%%%%%%%%%%%%%%%%%%%%%%%%%%%%%%%%%%%%%%%%%%%%%%%%%%%%%%%%%%%%%%%%%%%%%%%%%%%%%%%%%%%%%%%%
\section{Algebraic Characterization} \label{sec:interp_char}
%%%%%%%%%%%%%%%%%%%%%%%%%%%%%%%%%%%%%%%%%%%%%%%%%%%%%%%%%%%%%%%%%%%%%%%%%%%%%%%%%%%%%%%%%%%%%%%%%%

Let us consider the dual case, i.e., $(d)$ in \eqref{eq:pd}. Since $\tau=\frac{1}{2}$, according to \eqref{eq:min_T}  we choose $T=2$. In Lemma \ref{lem:nec_cond} we start specializing the necessary condition of Theorem \ref{thm:alg_char} to the dual interpolatory case. Then we split the characterization based on the arity $m$ being odd (Theorem \ref{thm:dual_char_odd}) or even (Theorem \ref{thm:dual_char_even}).

\begin{lem} \label{lem:nec_cond}
	Consider a convergent $m$-ary dual interpolatory subdivision scheme with compactly supported mask $\mathbf{a}=\{a_\ell\}_{\ell\in\mathbb{Z}}$, sub-symbols $A_k(z)$, $k\in\mathbb{Z}$, and basic limit function $\varphi\in\mathcal{C}^0(\mathbb{R})$ with compact symmetric support, that satisfies $\varphi(k)=\delta_{0,k}$, $k\in\mathbb{Z}$. Then
	\begin{equation} \label{eq:lemma}
		\frac{1}{2}\;+\;\sum_{\gamma=0}^{m-1}\;\Phi_{2,2\gamma+1}(z^m)\;=\;m\;z^{-1}\;\left(\;\sum_{\substack{\beta=0\\2\beta\equiv_m 1}}^{m-1}\;\frac{A_\beta(z^2)}{2}\;+\;\sum_{\substack{\beta,\gamma=0\\2(\gamma+\beta) \equiv_m 0}}^{m-1}\;A_\beta(z^2)\;\Phi_{2,2\gamma+1}(z)\;\right).
	\end{equation}
\end{lem}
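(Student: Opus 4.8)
The plan is to derive \eqref{eq:lemma} as a direct specialization of the general identity \eqref{eq:sym_char} to the dual interpolatory setting, in which the interpolation condition $\varphi(k)=\delta_{0,k}$ collapses all even-indexed sub-symbols to elementary quantities. Concretely, I would instantiate Theorem \ref{thm:alg_char} with $\tau=\frac{1}{2}$ and $T=2$, so that $\tau T=1$, $mT=2m$, and $z^{-\tau T}=z^{-1}$. Identity \eqref{eq:sym_char} then reads $\sum_{\gamma=0}^{2m-1}\Phi_{2,\gamma}(z^m)=m\,z^{-1}\sum_{\beta=0}^{m-1}\sum_{\gamma+2\beta\equiv_m 1}A_\beta(z^2)\,\Phi_{2,\gamma}(z)$, and the remaining task is to rewrite both sides after separating the even and odd values of the running index $\gamma\in\{0,\dots,2m-1\}$.

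The key computational step is to evaluate the even-indexed sub-symbols via \eqref{eq:interp}. Setting $\gamma=2j$ in \eqref{eq:sub_sym_P} gives $\Phi_{2,2j}(z)=\frac{1}{2}\sum_{k\in\mathbb{Z}}\varphi(mk+j)\,z^{2mk+2j}$, whose arguments $mk+j$ are integers; by the interpolation hypothesis $\varphi(mk+j)=\delta_{0,mk+j}$, which is nonzero only for $k=0$ and $j=0$. Hence $\Phi_{2,0}(z)=\frac{1}{2}$, while $\Phi_{2,2j}(z)=0$ for $j=1,\dots,m-1$. Substituting this into the left-hand side, its even part collapses to the single term $\Phi_{2,0}(z^m)=\frac{1}{2}$, whereas the odd indices $\gamma=2j+1$, $j=0,\dots,m-1$, produce $\sum_{\gamma=0}^{m-1}\Phi_{2,2\gamma+1}(z^m)$ after relabelling $j\to\gamma$; together these reproduce the left-hand side of \eqref{eq:lemma}.

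On the right-hand side I would again split the inner index by parity under the congruence $\gamma+2\beta\equiv_m 1$. For even $\gamma=2\gamma'$ the same vanishing retains only $\gamma'=0$, where the congruence specializes to $2\beta\equiv_m 1$ and $\Phi_{2,0}=\frac{1}{2}$, yielding the first sum $m\,z^{-1}\sum_{2\beta\equiv_m 1}\frac{A_\beta(z^2)}{2}$. For odd $\gamma=2\gamma'+1$ the congruence becomes $2(\gamma'+\beta)\equiv_m 0$, which after relabelling $\gamma'\to\gamma$ gives the second double sum over $\Phi_{2,2\gamma+1}(z)$. The only delicate point is the bookkeeping of the congruence conditions as $\gamma$ is rewritten through its parity, since the target residue class shifts from $1$ to $0$ in passing from the even to the odd branch; once this substitution is tracked consistently, \eqref{eq:lemma} follows at once.
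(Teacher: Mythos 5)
Your proposal is correct and follows essentially the same route as the paper's proof: specialize Theorem \ref{thm:alg_char} to $\tau=\tfrac12$, $T=2$, use the interpolation condition to show $\Phi_{2,0}(z)=\tfrac12$ and $\Phi_{2,2j}(z)=0$ for $j\neq 0$, then split both sides of \eqref{eq:sym_char} by the parity of $\gamma$, tracking how the congruence $\gamma+2\beta\equiv_m 1$ becomes $2\beta\equiv_m 1$ on the even branch and $2(\gamma'+\beta)\equiv_m 0$ on the odd branch. As a minor aside, your writing of the left-hand side with argument $z^m$ in $\Phi_{2,2\gamma+1}(z^m)$ is the correct form (the paper's proof has a small typographical slip writing $z$ there, though its lemma statement agrees with yours).
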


\begin{proof}
	First we rewrite the necessary condition for refinability \eqref{eq:sym_char} with $\tau=\frac{1}{2}$ and $T=2$, obtaining
	\begin{equation} \label{eq:step2}
		\sum_{\gamma=0}^{2m-1}\;\Phi_{2,\gamma}(z^m)\;=\;m\;z^{-1}\;\sum_{\beta=0}^{m-1}\;\sum_{\substack{\gamma=0\\\gamma+2\beta \equiv_m 1}}^{2m-1}\;A_{\beta}(z^2)\;\Phi_{2,\gamma}(z).
	\end{equation}
	The fact that $\varphi(k)=\delta_{0,k}$, $k\in\mathbb{Z}$, together with the definition \eqref{eq:sub_sym_P} implies that,
	\[
		\Phi_{2,2\gamma}(z)\;=\;\left\{\begin{array}{cl}
			\frac{1}{2}, &\textrm{ if } \gamma=0, \\ \\
			0, &\textrm{ otherwise.}
		\end{array}\right.
	\]
	Thus, dividing the sums over $\gamma$ in \eqref{eq:step2} into sums given by the indices $2\gamma+j$, $\gamma=0,\dots,m-1$, $j=0,1$, we obtain
	\[
		\sum_{\gamma=0}^{2m-1}\;\Phi_{2,\gamma}(z^m)\;=\;\frac{1}{2}\;+\;\sum_{\gamma=0}^{m-1}\;\Phi_{2,2\gamma+1}(z),
	\]
	and
	\[
		m\;z^{-1}\;\sum_{\beta=0}^{m-1}\;\sum_{\substack{\gamma=0\\\gamma+2\beta \equiv_m 1}}^{2m-1}\;A_{\beta}(z^2)\;\Phi_{2,\gamma}(z) \;=\; m\;z^{-1}\;\left(\;\sum_{\substack{\beta=0\\2\beta\equiv_m 1}}^{m-1}\;\frac{A_\beta(z^2)}{2}\;+\sum_{\substack{\beta,\gamma=0\\2(\gamma+\beta) \equiv_m 0}}^{m-1}\;A_{\beta}(z^2)\;\Phi_{2,2\gamma+1}(z)\;\right).
	\]
This completes the proof.
\end{proof}

As a consequence of Lemma \ref{lem:nec_cond}, it is easy to prove that for the arity $m=2$ there are no continuous refinable functions. This could be the reason why dual interpolatory schemes have not been investigated before.

\begin{cor}%[Lemma \ref{lem:nec_cond}]
	For $m=2$ there are no convergent dual interpolatory subdivision schemes.
\end{cor}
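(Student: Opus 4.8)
The plan is to specialize Lemma \ref{lem:nec_cond} to $m=2$ and then force a contradiction by a degree count in the resulting polynomial identity. First I would observe that for $m=2$ the congruence $2\beta\equiv_2 1$ is never satisfied, so the first sum on the right-hand side of \eqref{eq:lemma} is empty, whereas $2(\gamma+\beta)\equiv_2 0$ holds for every $\beta,\gamma\in\{0,1\}$, so the remaining double sum factorises. Using $A_0+A_1=A$ (see \eqref{eq:sub_sym_A}), \eqref{eq:lemma} collapses to
\begin{equation} \label{eq:cor_key}
	\frac{1}{2}+Q(z^2)\;=\;2\,z^{-1}\,A(z^2)\,Q(z),\qquad Q(z)\;:=\;\Phi_{2,1}(z)+\Phi_{2,3}(z),
\end{equation}
where, by \eqref{eq:sub_sym_P}, $Q(z)=\tfrac12\sum_{j\in\mathbb{Z}}\varphi(j+\tfrac12)\,z^{2j+1}$ is an (odd) Laurent polynomial whose coefficients are the samples of $\varphi$ at the half-integers.

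Next I would locate the support of $\varphi$. For $m=2$, $\tau=\tfrac12$ and $k_\ell=1-k_r$ the standard support estimate gives $\supp(\varphi)\subseteq[\tfrac12-k_r,\,k_r-\tfrac12]$. Since the scheme is convergent, $\varphi\in\mathcal{C}^0(\mathbb{R})$, and continuity at the right endpoint of the support forces $\varphi(k_r-\tfrac12)=0$. Hence, letting $M$ be the largest integer with $\varphi(M+\tfrac12)\neq0$ (such an $M$ exists because the constant $\tfrac12$ in \eqref{eq:cor_key} prevents $Q$ from vanishing identically), one necessarily has $M\le k_r-2$.

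The contradiction then comes from comparing the highest powers of $z$ in \eqref{eq:cor_key}. The top power on the right-hand side is $(-1)+2k_r+(2M+1)=2k_r+2M$, with leading coefficient $\tfrac12\,a_{k_r}\,\varphi(M+\tfrac12)\neq0$ (here $a_{k_r}\neq0$ by definition of $k_r$, and $\varphi(M+\tfrac12)\neq0$ by the choice of $M$), while the top power on the left-hand side, coming from $Q(z^2)$, is $2(2M+1)=4M+2$. Matching these yields $4M+2=2k_r+2M$, i.e.\ $M=k_r-1$, in direct conflict with $M\le k_r-2$. Therefore no convergent dual interpolatory scheme of arity $2$ can exist.

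The conceptual crux I would emphasise is that continuity is exactly what breaks the degree balance demanded by \eqref{eq:cor_key}: it lowers the effective degree of $Q$ by forcing the endpoint sample to vanish, whereas \eqref{eq:cor_key} insists on a precise matching of the extreme powers. The remaining work is purely bookkeeping of the extreme coefficients, namely checking that the leading term of the right-hand side is nonzero and that the top power of $Q(z^2)$ is genuinely $4M+2$ and not overtaken by the constant $\tfrac12$; the latter amounts to $M\ge0$, which follows from the symmetry $\varphi(j+\tfrac12)=\varphi(-j-\tfrac12)$ of the scheme (were $M<0$, symmetry would give $Q\equiv0$). I expect this verification of the boundary behaviour of the Laurent polynomials, rather than any substantial new idea, to be the only delicate step.
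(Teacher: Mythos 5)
Your reduction of \eqref{eq:lemma} for $m=2$ to the identity $\tfrac12+Q(z^2)=2z^{-1}A(z^2)\,Q(z)$ is exactly the paper's \eqref{eq:bin_step}, but your endgame is genuinely different. The paper matches the extreme powers to conclude that the first and last nonzero entries of $\mathbf{a}$ must equal $1$, and then rules out convergence by citing the fact that such a scheme's difference scheme cannot be contractive. You instead use convergence more directly: continuity of $\varphi$ forces it to vanish at the endpoints of its support, and this lowers the top degree of $Q$ enough to contradict the degree matching. This buys a more self-contained argument (no appeal to external contractivity results), at the price of invoking the support formula \eqref{eq:supp_phi}; both routes share the same algebraic core.

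There is, however, a genuine gap in your treatment of the case $M<0$. You appeal to ``the symmetry $\varphi(j+\tfrac12)=\varphi(-j-\tfrac12)$ of the scheme'', but a dual scheme is defined by condition $(d)$ of \eqref{eq:pd}, i.e.\ by symmetry of the \emph{support} of $\varphi$, not of $\varphi$ itself; evenness of $\varphi$ would require a palindromic mask, which is an extra assumption (the paper itself only \emph{imposes} mask symmetry later, in its constructive examples). So the claim that $M<0$ forces $Q\equiv0$ is unsupported. Fortunately the gap is fixable with the tools you already use: if $M\le-1$, then every power appearing in $Q(z^2)$ is at most $4M+2\le-2$, so the top power of the left-hand side is $0$ (from the constant $\tfrac12$); matching it with the top power $2k_r+2M$ of the right-hand side, whose coefficient $\tfrac12 a_{k_r}\varphi\left(M+\tfrac12\right)$ is nonzero, gives $M=-k_r$. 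But continuity of $\varphi$ at the \emph{left} endpoint $\tfrac12-k_r$ of its support shows that the smallest index $M'$ with $\varphi\left(M'+\tfrac12\right)\neq0$ satisfies $M'\ge 1-k_r$, contradicting $M'\le M=-k_r$. Hence $M\ge0$, and your main degree-count argument then closes the proof.
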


\begin{proof}
	Consider the necessary condition \eqref{eq:lemma} for $m=2$:
	\[
		\frac{1}{2}\;+\;\sum_{\gamma=0}^{1}\;\Phi_{2,2\gamma+1}(z^2)\;=\;2\;z^{-1}\;\left(\;\sum_{\substack{\beta=0\\2\beta\equiv_2 1}}^{1}\;\frac{A_\beta(z^2)}{2}\;+\;\sum_{\substack{\beta,\gamma=0\\2(\gamma+\beta) \equiv_2 0}}^{1}\;A_\beta(z^2)\;\Phi_{2,2\gamma+1}(z)\;\right).
	\]
	In particular, the left-hand side can be rewritten as
	\[
		\frac{1}{2}\;+\;\sum_{\gamma=0}^{1}\;\Phi_{2,2\gamma+1}(z^2)\;=\;\frac{1}{2}\;+\;\Phi_{2,1}(z^2)\;+\;\Phi_{2,3}(z^2)\;=:\;F(z^2).
		%&=& \frac{1}{2}\;+\;\frac{1}{2}\;\sum_{k\in\mathbb{Z}}\;\left[\;\varphi\left(2k+\frac{1}{2}\right)\;z^{8k+2}\;+\;\varphi\left(2k+\frac{3}{2}\right)\;z^{8k+6}\;\right]
	\]
	Then, recalling \eqref{eq:sub_sym_A}, we rewrite the right-hand side as
	\[\begin{array}{l}
		2\;z^{-1}\;\left(\;\sum_{\substack{\beta=0\\2\beta\equiv_2 1}}^{1}\;\frac{A_\beta(z^2)}{2}\;+\;\sum_{\substack{\beta,\gamma=0\\2(\gamma+\beta) \equiv_2 0}}^{1}\;A_\beta(z^2)\;\Phi_{2,2\gamma+1}(z)\;\right)\;= \\ \\ \qquad\qquad\qquad\qquad\qquad\qquad\qquad\qquad=\;2\;z^{-1}\;\left(\;A_0(z^2)\;+\;A_1(z^2)\;\right)\;\left(\;\Phi_{2,1}(z)\;+\;\Phi_{2,3}(z)\;\right) \\ \\
		\qquad\qquad\qquad\qquad\qquad\qquad\qquad\qquad=\;\sum_{k\in\mathbb{Z}}\;a_k\;z^{2k-1}\;\left(\;F(z)\;-\;\frac{1}{2}\;\right).
		%\qquad\qquad\qquad=\;\frac{1}{2}\;\sum_{k\in\mathbb{Z}}\;a_k\;z^{2k-1}\;\sum_{h\in\mathbb{Z}}\;\left[\;\varphi\left(2h+\frac{1}{2}\right)\;z^{4h+1}\;+\;\varphi\left(2h+\frac{3}{2}\right)\;z^{4h+3}\;\right] \\ \\
		%\qquad\qquad\qquad=\;\frac{1}{2}\;\sum_{h,k\in\mathbb{Z}}\;\left[\;a_k\;\varphi\left(2h+\frac{1}{2}\right)\;z^{4h+2k}\;+\;a_k\;\varphi\left(2h+\frac{3}{2}\right)\;z^{4h+2k+2}\;\right] \\ \\
	\end{array}\]
	Thus, since
	\[
		F(z)\;=\;\sum_{j\in\mathbb{Z}}\;f_j\;z^j\quad\textrm{ with }\quad f_j\;=\;\left\{\begin{array}{cl}
				\frac{1}{2},&j=0,\\ \\
				\varphi\left(2k+\frac{1}{2}\right),&j=4k+1,\\ \\
				\varphi\left(2k+\frac{3}{2}\right),&j=4k+3,\\ \\
				0,&\textrm{ otherwise,}
			\end{array}\right.
	\]
	we get
	\begin{equation} \label{eq:bin_step}
		\sum_{j\in\mathbb{Z}}\;f_j\;z^{2j}\;=\;\sum_{k\in\mathbb{Z}}\;a_k\;z^{2k-1}\;\sum_{j\in\mathbb{Z}\setminus\{0\}}\;f_j\;z^j.
	\end{equation}
	Now both $\mathbf{a}=\{a_k\}_{k\in\mathbb{Z}}$ and $\varphi$ (and so $\mathbf{f}=\{f_j\}_{j\in\mathbb{Z}}$) are compactly supported. Then, because of \eqref{eq:bin_step}, the first and the last non-zero elements of $\mathbf{a}$ must be equal to $1$. There follows that the associated subdivision scheme can not be convergent, since its difference scheme cannot be contractive \cite{CHOI2006351,MR1172120,MR2008967}.
\end{proof}

\begin{rem}
	Actually a dual interpolatory scheme was almost known since 1884. The scheme is the one with arity $m=3$ and mask $\mathbf{a}= \left \{\; \frac{1}{2},\;1,\;1,\; \frac{1}{2}\; \right \}$, whose basic limit function is related to the \emph{Cantor function} (see, e.g., \cite{MR2195181}). Indeed, the resulting basic limit function $\varphi$, Figure \ref{fig:Cantor}, has $\supp(\varphi)= \left [-\frac{3}{4}, \frac{3}{4} \right ]$ and is divided into three parts: over $\left [-\frac{3}{4}, -\frac{1}{4} \right ]$ it is exactly the well-known ascending Cantor function, over $\left [ -\frac{1}{4},\frac{1}{4} \right ]$ it is constant equal to $1$ and over $\left [ \frac{1}{4},\frac{3}{4} \right ]$ it is equal to the descending Cantor function. This scheme however reproduces only constants and $\varphi\in\mathcal{C}^{\log_3(2)}(\mathbb{R})$. So it is not really useful for applications. This is the shortest possible basic limit function obtained by a converging dual interpolatory scheme.
\end{rem}

\begin{figure}[h!]
	\centering
	\includegraphics[scale=0.45]{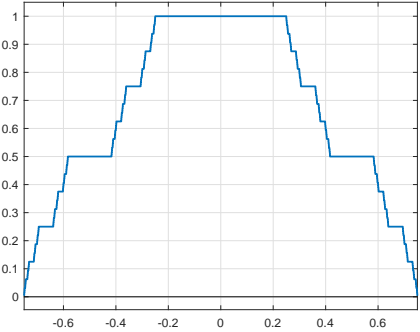}
	\caption{The basic limit function of the ternary scheme related to the Cantor function, which is supported in $[-0.75,0.75]$, reproduces constants and belongs to $\mathcal{C}^{\log_3(2)}(\mathbb{R})$.}
	\label{fig:Cantor}
\end{figure}

At this point, to proceed with the algebraic characterization of dual interpolatory schemes we have to split computations into two cases: the case with $m$ odd (Theorem \ref{thm:dual_char_odd}) and the case with $m$ even (Theorem \ref{thm:dual_char_even}). The changes are due to the equivalences that the indices of the sums in the right-hand-side of \eqref{eq:lemma} must satisfy, which depend on $m$. We thus treat the two cases separately.

\begin{thm} \label{thm:dual_char_odd}
	Let $m=2\lambda+1,\;\lambda\in\mathbb{N}\setminus\{0\}$. A convergent $m$-ary subdivision scheme is a dual interpolatory scheme if and only if
	\begin{equation} \label{eq:dual_char_odd} \frac{1}{2}\;+\;\sum_{\gamma=0}^{m-1}\;\Phi_{2,2\gamma+1}(z^{m})\;=\;m\;z^{-1}\;\left(\;\frac{A_{\frac{m+1}{2}}(z^2)}{2}\;+\;\sum_{\gamma=0}^{m-1} \;A_{m-\gamma}(z^2)\;\Phi_{2,2\gamma+1}(z)\;\right).
	\end{equation}
\end{thm}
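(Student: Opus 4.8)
The statement is an equivalence, so the plan is to treat the two implications separately, using Lemma~\ref{lem:nec_cond} for necessity and playing the asserted identity against the unconditional identity of Theorem~\ref{thm:alg_char} for sufficiency. Throughout I work in the dual case $\tau=\tfrac12$, $T=2$ fixed at the start of the section, so that Theorem~\ref{thm:alg_char} applies with $\tau T=1\in\mathbb{N}$.

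For necessity I would start from \eqref{eq:lemma}, which Lemma~\ref{lem:nec_cond} already supplies for every convergent dual interpolatory scheme, and merely resolve its two congruence conditions under the hypothesis $m=2\lambda+1$. Since $\gcd(2,m)=1$, multiplication by $2$ is a bijection modulo $m$, so $2\beta\equiv_m 1$ has the unique solution $\beta=\tfrac{m+1}{2}$ (indeed $2\cdot\tfrac{m+1}{2}=m+1\equiv_m 1$), collapsing the first inner sum to the single term $\tfrac12 A_{(m+1)/2}(z^2)$. For the double sum, $2(\gamma+\beta)\equiv_m 0$ is equivalent to $\beta\equiv_m-\gamma$, whose representative in $\{0,\dots,m-1\}$ is $\beta=m-\gamma$ for $\gamma\ge 1$ and $\beta=0$ for $\gamma=0$; invoking the $m$-periodicity $A_{n+m}=A_n$ of the sub-symbols \eqref{eq:sub_sym_A} (so that $A_m=A_0$) lets me write every surviving term uniformly as $A_{m-\gamma}(z^2)\Phi_{2,2\gamma+1}(z)$. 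This rewrites \eqref{eq:lemma} verbatim as \eqref{eq:dual_char_odd}.

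For sufficiency, the key observation is that \eqref{eq:dual_char_odd} no longer displays the integer samples of $\varphi$ (the even-indexed $\Phi_{2,2\gamma}$), so the interpolation property must be recovered indirectly. I would invoke Theorem~\ref{thm:alg_char} directly, which for any convergent dual scheme yields the full identity \eqref{eq:step2} \emph{without} assuming interpolation, and then separate \eqref{eq:step2} according to the parity of the exponent of $z$. For odd $m$ this separation is clean: the terms $\Phi_{2,2\gamma}(z^m)$ carry only even powers and $\Phi_{2,2\gamma+1}(z^m)$ only odd powers, while the factor $z^{-1}A_\beta(z^2)$ flips the parity of the corresponding $\Phi_{2,\gamma}(z)$ on the right. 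The even-power component of \eqref{eq:step2} then reads $\sum_{\gamma}\Phi_{2,2\gamma}(z^m)=m z^{-1}\sum_{\gamma}A_{m-\gamma}(z^2)\Phi_{2,2\gamma+1}(z)$, whose right-hand side is exactly the even-power component of the assumed \eqref{eq:dual_char_odd}, namely the constant $\tfrac12$. Hence $\sum_{\gamma}\Phi_{2,2\gamma}(z^m)=\tfrac12$, and since $\sum_{\gamma}\Phi_{2,2\gamma}(z^m)=\tfrac12\sum_{n\in\mathbb{Z}}\varphi(n)z^{2mn}$ by reindexing $n=mk+\gamma$, comparing coefficients forces $\varphi(n)=\delta_{0,n}$, i.e.\ the scheme is interpolatory.

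I expect the main obstacle to be precisely this reverse passage. Unlike necessity, where the interpolatory values are a hypothesis that simplifies the even-indexed sub-symbols away, sufficiency must reconstruct those same values from an identity that does not contain them; the whole argument hinges on the fact that the even-power right-hand side of the always-valid identity \eqref{eq:step2} coincides termwise (via the same $\beta=m-\gamma$ reindexing used in necessity) with the even-power right-hand side of \eqref{eq:dual_char_odd}. I would therefore take care to verify that the parity split is legitimate---each side of \eqref{eq:step2} decomposing into a purely even and a purely odd part genuinely requires $m$ odd---and that the matching of the two right-hand sides is an exact termwise identification rather than a mere equality of sums, so that the conclusion $\sum_{\gamma}\Phi_{2,2\gamma}(z^m)=\tfrac12$ is truly forced.
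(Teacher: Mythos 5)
Your proposal is correct and follows essentially the same route as the paper: necessity by resolving the two congruences in Lemma~\ref{lem:nec_cond} (with $A_m=A_0$), and sufficiency by playing \eqref{eq:dual_char_odd} against the unconditional refinability identity \eqref{eq:step2} and using the parity of the powers of $z$ (legitimate precisely because $m$ is odd) to force $\sum_{\gamma}\Phi_{2,2\gamma}(z^m)=\tfrac12$, hence $\varphi(n)=\delta_{0,n}$. The only organizational difference is that the paper subtracts the two identities first and then separates even from odd powers in the resulting equation \eqref{eq:step}, which requires also resolving the congruence $2(\gamma+\beta)\equiv_m 1$, whereas you split each identity by parity and chain the even parts, never needing the odd-power terms at all.
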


\begin{proof}
	We start from \eqref{eq:lemma} and we need to further study the right-hand-side. We observe that the only $\beta\in\{0,\dots,m-1\}$ satisfying $2\beta\equiv_m 1$ is $\beta=\frac{m+1}{2}$. On the other hand, the only possibility to satisfy $2(\beta+\gamma)\equiv_m 0$ for $\beta,\gamma\in\{0,\dots,m-1\}$ is that $\beta+\gamma\in\{0,m\}$, i.e., $\beta=m-\gamma$ if $\beta,\gamma\neq0$, and $\beta=\gamma=0$. Thus, the expressions in the right-hand side of \eqref{eq:lemma} can be rewritten as
	\[\begin{array}{l}
		\sum_{\substack{\beta=0\\2\beta\equiv_m 1}}^{m-1}\;\frac{A_\beta(z^2)}{2}\;+\;\sum_{\substack{\beta,\gamma=0\\2(\gamma+\beta) \equiv_m 0}}^{m-1}\;A_\beta(z^2)\;\Phi_{2,2\gamma+1}(z) \;=\\ \\
		\qquad\qquad\qquad\qquad\qquad=\;\frac{A_{\frac{m+1}{2}}(z^2)}{2}\;+\;A_{0}(z^2)\;\Phi_{2,1}(z)\;+\;\sum_{\gamma=1}^{m-1} \;A_{m-\gamma}(z^2)\;\Phi_{2,2\gamma+1}(z) \\ \\
		\qquad\qquad\qquad\qquad\qquad=\;\frac{A_{\frac{m+1}{2}}(z^2)}{2}\;+\;\sum_{\gamma=0}^{m-1} \;A_{m-\gamma}(z^2)\;\Phi_{2,2\gamma+1}(z),
	\end{array}\]
	where the last equality holds since $A_{0}(z)\;=\;A_{m}(z)$. Thus, equation \eqref{eq:dual_char_odd} follows from \eqref{eq:lemma}.
	
	Conversely, if \eqref{eq:dual_char_odd} holds, we can subtract it from the necessary condition for refinability \eqref{eq:sym_char} with $\tau=\frac{1}{2}$ and $T=2$, getting, for the left-hand-side,
	\begin{equation} \label{eq:LHS_odd}
		\sum_{\gamma=0}^{2m-1}\;\Phi_{2,\gamma}(z^m)-\frac{1}{2}\;-\;\sum_{\gamma=0}^{m-1}\;\Phi_{2,2\gamma+1}(z^{m})\;=\;\sum_{\gamma=0}^{m-1}\;\Phi_{2,2\gamma}(z^m)\;-\;\frac{1}{2},
	\end{equation}
	and, for the right-hand-side,
	\begin{equation} \label{eq:RHS_odd}\begin{array}{l}
		m\;z^{-1}\;\sum_{\beta=0}^{m-1}\;\sum_{\substack{\gamma=0\\\gamma+2\beta \equiv_m 1}}^{2m-1}\;A_{\beta}(z^2)\;\Phi_{2,\gamma}(z)\;-\;m\;z^{-1}\;\left(\;\frac{A_{\frac{m+1}{2}}(z^2)}{2}\;+\;\sum_{\gamma=0}^{m-1} \;A_{m-\gamma}(z^2)\;\Phi_{2,2\gamma+1}(z)\;\right)\;=\\ \\
		\qquad\qquad=\;m\;z^{-1}\;\Bigg(\;\sum_{\substack{\beta,\gamma=0\\2(\gamma+\beta) \equiv_m 1}}^{m-1}\;A_{\beta}(z^2)\;\Phi_{2,2\gamma}(z)\;+\;\sum_{\substack{\beta,\gamma=0\\2(\gamma+\beta) \equiv_m 0}}^{m-1}\;A_{\beta}(z^2)\;\Phi_{2,2\gamma+1}(z)\;-\;\frac{A_{\frac{m+1}{2}}(z^2)}{2} \\ \\
		\hfill -\;\sum_{\gamma=0}^{m-1} \;A_{m-\gamma}(z^2)\;\Phi_{2,2\gamma+1}(z)\;\Bigg) \\ \\
		\qquad\qquad=\;m\;z^{-1}\;\left(\;\sum_{\gamma=0}^{\frac{m+1}{2}}\;A_{\frac{m+1}{2}-\gamma}(z^2)\;\Phi_{2,2\gamma}(z)\;+\;\sum_{\gamma=\frac{m+3}{2}}^{m-1}\;A_{\frac{3m+1}{2}-\gamma}(z^2)\;\Phi_{2,2\gamma}(z)\;-\;\frac{A_{\frac{m+1}{2}}(z^2)}{2}\right.\\ \\
		\hfill\left.+\;A_{0}(z^2)\;\Phi_{2,1}(z)\;+\;\sum_{\gamma=1}^{m-1}\;A_{m-\gamma}(z^2)\;\Phi_{2,2\gamma+1}(z)\;-\;\sum_{\gamma=0}^{m-1} \;A_{m-\gamma}(z^2)\;\Phi_{2,2\gamma+1}(z)\;\right) \\ \\
		\qquad\qquad=\;m\;z^{-1}\left(\;\sum_{\gamma=0}^{m-1}\;A_{\frac{m+1}{2}-\gamma}(z^2)\;\Phi_{2,2\gamma}(z)\;-\;\frac{A_{\frac{m+1}{2}}(z^2)}{2}\;\right),
	\end{array}\end{equation}
	where we used the fact that $A_{\frac{3m+1}{2}-\gamma}(z)=A_{\frac{m+1}{2}-\gamma}(z)$. Combining \eqref{eq:LHS_odd} and \eqref{eq:RHS_odd}, we obtain
	\begin{equation} \label{eq:step}
		\begin{array}{rcl}
			\frac{1}{2}&=&\sum_{\gamma=0}^{m-1}\;\Phi_{2,2\gamma}(z^m)\;+\;m\;z^{-1}\left(\;\frac{A_{\frac{m+1}{2}}(z^2)}{2}\;-\;\sum_{\gamma=0}^{m-1}\;A_{\frac{m+1}{2}-\gamma}(z^2)\;\Phi_{2,2\gamma}(z)\;\right).
		\end{array}
	\end{equation}
	Recalling \eqref{eq:sub_sym_A} and \eqref{eq:sub_sym_P}, one realizes that all the powers of $z$ of the second term of the right-hand-side of \eqref{eq:step} are odd, and thus, since the left-hand side $\frac{1}{2}$ presents a unique power of $z$ which is even, it must hold that
	\begin{equation} \label{eq:step3} \frac{1}{2}\;=\;\sum_{\gamma=0}^{m-1}\;\Phi_{2,2\gamma}(z^m)\;=\;\frac{1}{2}\;\sum_{\gamma=0}^{m-1}\;\sum_{k\in\mathbb{Z}}\;\varphi\left(mk+\gamma\right)z^{2m(mk+\gamma)},
	\end{equation}
	which implies $\varphi(k)\;=\;\delta_{0,k}$, $k\in\mathbb{Z}$, and this concludes the proof.
\end{proof}

\begin{thm} \label{thm:dual_char_even}
	Let $m= 2\lambda,\;\lambda\in\mathbb{N}\setminus\{0,1\}$. A convergent $m$-ary subdivision scheme is a dual interpolatory scheme if and only if
	\begin{equation} \label{eq:dual_char_even}
		\frac{1}{2}\;+\;\sum_{\gamma=0}^{m-1}\;\Phi_{2,2\gamma+1}(z^{m})\;=\;m\;z^{-1}\;\sum_{\gamma=0}^{m-1} \;\bigg(\;A_{\frac{m}{2}-\gamma}(z^2)\;+\;A_{m-\gamma}(z^2)\;\bigg)\;\Phi_{2,2\gamma+1}(z).
	\end{equation}
\end{thm}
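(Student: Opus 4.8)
The plan is to mirror the odd-arity argument of Theorem~\ref{thm:dual_char_odd}, proving the two implications separately; the parity $m=2\lambda$ enters only through the congruences that select the surviving index pairs. Throughout I rely on the $m$-periodicity of the sub-symbols, $A_\beta=A_{\beta+m}$, exactly as in the odd case.

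For the forward implication I would start from the necessary condition \eqref{eq:lemma} of Lemma~\ref{lem:nec_cond} and simplify its right-hand side. Since $m$ is even, $2\beta$ is always even, so the congruence $2\beta\equiv_m 1$ has no solution in $\{0,\dots,m-1\}$; hence the isolated sum $\sum_{2\beta\equiv_m 1}A_\beta(z^2)/2$ is empty, which explains why no stand-alone sub-symbol term appears in \eqref{eq:dual_char_even}. For the double sum I rewrite $2(\gamma+\beta)\equiv_m 0$ as $\gamma+\beta\equiv_\lambda 0$. For each fixed $\gamma\in\{0,\dots,m-1\}$ the residue class $-\gamma \bmod \lambda$ has exactly two representatives in $\{0,\dots,m-1\}$, which modulo $m$ are $\frac{m}{2}-\gamma$ and $m-\gamma$; collecting them (using periodicity to fold $\frac{m}{2}-\gamma$ or $m-\gamma$ back into range, e.g.\ $A_m=A_0$ at $\gamma=0$) turns the double sum into $\sum_{\gamma=0}^{m-1}(A_{\frac{m}{2}-\gamma}(z^2)+A_{m-\gamma}(z^2))\Phi_{2,2\gamma+1}(z)$, which is exactly \eqref{eq:dual_char_even}.

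For the converse I would assume \eqref{eq:dual_char_even} and subtract it from the full necessary condition \eqref{eq:step2}, i.e.\ \eqref{eq:sym_char} with $\tau=\frac{1}{2}$ and $T=2$. On the left-hand side the odd-indexed terms $\Phi_{2,2\gamma+1}(z^m)$ cancel, leaving $\sum_{\gamma=0}^{m-1}\Phi_{2,2\gamma}(z^m)-\frac{1}{2}$. The decisive simplification relative to the odd case is that the right-hand side of \eqref{eq:step2} involves only odd-indexed $\Phi$: writing $\gamma=2\gamma'$ forces $2(\gamma'+\beta)\equiv_m 1$, impossible for even $m$, whereas $\gamma=2\gamma'+1$ forces $\gamma'+\beta\equiv_\lambda 0$ and reproduces precisely the right-hand side of \eqref{eq:dual_char_even}. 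The two right-hand sides therefore cancel identically, leaving the single identity $\frac{1}{2}=\sum_{\gamma=0}^{m-1}\Phi_{2,2\gamma}(z^m)$. Expanding this via \eqref{eq:sub_sym_P} gives $\frac{1}{2}\sum_{n\in\mathbb{Z}}\varphi(n)\,z^{2mn}$, and matching coefficients yields $\varphi(n)=\delta_{0,n}$, so the scheme is dual interpolatory.

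The main obstacle I anticipate is purely combinatorial: for each $\gamma$ one must enumerate the two solutions of $\gamma+\beta\equiv_\lambda 0$ in $\{0,\dots,m-1\}$ and check that, after reduction modulo $m$, they are exactly the indices $\frac{m}{2}-\gamma$ and $m-\gamma$ of \eqref{eq:dual_char_even}, paying attention to the boundary values of $\gamma$ at which one representative leaves $\{0,\dots,m-1\}$ and must be folded back by periodicity. In contrast with the odd case, the converse needs no parity-of-powers argument, since the subtraction annihilates the entire right-hand side; only the bookkeeping of the index shifts requires care.
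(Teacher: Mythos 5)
Your proposal is correct and takes essentially the same route as the paper's own proof: the forward implication simplifies the right-hand side of Lemma \ref{lem:nec_cond} by solving the congruences (your reformulation $2(\gamma+\beta)\equiv_m 0 \Leftrightarrow \gamma+\beta\equiv_\lambda 0$, with exactly two representatives $\frac{m}{2}-\gamma$ and $m-\gamma$ modulo $m$, is equivalent to the paper's explicit listing $\beta+\gamma\in\left\{0,\frac{m}{2},m,\frac{3m}{2}\right\}$ combined with the periodicity $A_0=A_m$, $A_{\frac{3m}{2}-\gamma}=A_{\frac{m}{2}-\gamma}$), and the converse subtracts \eqref{eq:dual_char_even} from \eqref{eq:step2}, observes that the right-hand sides cancel identically, and matches coefficients in $\frac{1}{2}=\sum_{\gamma=0}^{m-1}\Phi_{2,2\gamma}(z^m)$ to get $\varphi(n)=\delta_{0,n}$. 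You also correctly identified the structural difference from the odd-arity case, namely that no parity-of-powers argument is needed here because the subtraction annihilates the entire right-hand side, which is precisely what happens in the paper's computation.
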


\begin{proof}
	As for the proof of Theorem \ref{thm:dual_char_odd}, we start from \eqref{eq:lemma} and we need to further study the right-hand-side. We first observe that no $\beta\in\{0,\dots,m-1\}$ satisfies $2\beta\equiv_m 1$, being $m$ even. On the other hand, the only possibility to satisfy $2(\beta+\gamma)\equiv_m 0$ for $\beta,\gamma\in\{0,\dots,m-1\}$ is that $\beta+\gamma\in \left \{ 0, \frac{m}{2}, m, \frac{3m}{2} \right \}$, namely $\beta=\gamma=0$ or $\beta= \frac{m}{2} -\gamma$, $\beta=m-\gamma$, $\beta= \frac{3m}{2} -\gamma$. Thus,
	\[\begin{array}{l}
		\sum_{\substack{\beta=0\\2\beta\equiv_m 1}}^{m-1}\;\frac{A_\beta(z^2)}{2}\;+\;\sum_{\substack{\beta,\gamma=0\\2(\gamma+\beta) \equiv_m 0}}^{m-1}\;A_\beta(z^2)\;\Phi_{2,2\gamma+1}(z) \;=\\ \\
%\qquad\qquad\qquad\qquad \quad
=\;A_{0}(z^2)\;\Phi_{2,1}(z)\;+\;\sum_{\gamma=0}^{\frac{m}{2}} \;A_{\frac{m}{2}-\gamma}(z^2)\;\Phi_{2,2\gamma+1}(z)\;+\;\sum_{\gamma=1}^{m-1} \;A_{m-\gamma}(z^2)\;\Phi_{2,2\gamma+1}(z)
%\\ \\ \hfill
+\;\sum_{\gamma=\frac{m}{2}+1}^{m-1} \;A_{\frac{3m}{2}-\gamma}(z^2)\;\Phi_{2,2\gamma+1}(z) \\ \\
%\qquad\qquad\qquad\qquad \quad
=\;\sum_{\gamma=0}^{m-1} \;\bigg(\;A_{\frac{m}{2}-\gamma}(z^2)\;+\;A_{m-\gamma}(z^2)\;\bigg)\;\Phi_{2,2\gamma+1}(z),
	\end{array}\]
	where the last equality holds since $A_{0}(z)\;=\;A_{m}(z)$ and $A_{\frac{3m}{2}-\gamma}(z)\;=\;A_{\frac{m}{2}-\gamma}(z)$. Equation \eqref{eq:dual_char_even} follows then from \eqref{eq:lemma}.
	
	Conversely, if \eqref{eq:dual_char_even} holds, we can subtract it from the necessary condition for refinability \eqref{eq:sym_char} with $\tau=\frac{1}{2}$ and $T=2$, getting again \eqref{eq:LHS_odd}, while, for the right-hand-side,
	\begin{equation} \label{eq:RHS_even}\begin{array}{l}
		m\;z^{-1}\;\sum_{\beta=0}^{m-1}\;\sum_{\substack{\gamma=0\\\gamma+2\beta \equiv_m 1}}^{2m-1}\;A_{\beta}(z^2)\;\Phi_{2,\gamma}(z)\;-\;m\;z^{-1}\;\sum_{\gamma=0}^{m-1} \;\bigg(\;A_{\frac{m}{2}-\gamma}(z^2)\;+\;A_{m-\gamma}(z^2)\;\bigg)\;\Phi_{2,2\gamma+1}(z)\;=\\ \\
		\qquad\qquad=\;m\;z^{-1}\; \Bigg( \;\sum_{\substack{\beta,\gamma=0\\2(\gamma+\beta) \equiv_m 1}}^{m-1}\;A_{\beta}(z^2)\;\Phi_{2,2\gamma}(z)\;+\;\sum_{\substack{\beta,\gamma=0\\2(\gamma+\beta) \equiv_m 0}}^{m-1}\;A_{\beta}(z^2)\;\Phi_{2,2\gamma+1}(z) \\ \\
		\hfill -\;\sum_{\gamma=0}^{m-1} \;\bigg(\;A_{\frac{m}{2}-\gamma}(z^2)\;+\;A_{m-\gamma}(z^2)\;\bigg)\;\Phi_{2,2\gamma+1}(z)\; \Bigg ) \\ \\
		\qquad\qquad=\;0.
	\end{array}\end{equation}
	Thus we obtain again \eqref{eq:step3} which leads to $\varphi(k)\;=\;\delta_{0,k}$, $k\in\mathbb{Z}$, so concluding the proof.
\end{proof}

%%%%%%%%%%%%%%%%%%%%%%%%%%%%%%%%%%%%%%%%%%%%%%%%%%%%%%%%%%%%%%%%%%%%%%%%%%%%%%%%%%%%%%%%%%%%%%%%%%
\section{Constructive Examples} \label{sec:dual_interp}
%%%%%%%%%%%%%%%%%%%%%%%%%%%%%%%%%%%%%%%%%%%%%%%%%%%%%%%%%%%%%%%%%%%%%%%%%%%%%%%%%%%%%%%%%%%%%%%%%%

%{\color{red} cercare esempio con $m=4$ e maschera non multiplo di $m-1$, tipo 10,14,17}

In this section we show a linear algebra approach exploiting \eqref{eq:dual_char_odd} and \eqref{eq:dual_char_even} for the construction of dual interpolatory schemes  with arity $3$, $4$ and $5$, pointing out the pros and cons with respect to known primal interpolatory schemes. Exploiting the various degrees of freedom, this method can be used to search for dual interpolatory schemes with given wanted properties such as given arity, values of the basic limit function $\left \{\varphi \left ( k+ \frac{1}{2} \right ) \right \}_{k\in\mathbb{Z}}$, specific support length of the mask/basic limit function and/or degree of polynomial reproduction. The cases when the resulting linear system has no solution, are equivalent to the non-existence of a dual interpolatory scheme that satisfies the required properties.

In what follows matrices and vectors are considered to be bi-infinite where not specified and are labeled by bold uppercase and lowercase letters respectively. When useful, subsets of those matrices and vectors will be denoted using the convenient MatLab notation. Furthermore, we let the reader know that the given estimates of the regularity of the proposed schemes are obtained via joint spectral radius techniques. For the approximation of the joint spectral radius, the MatLab package \emph{t-toolboxes}, with parameter $\delta=1-1^{-12}$, has been used. This package implements the modified invariant polytope method by Guglielmi, Mejstrik and Protasov (see \cite{MR3886713,MR3009529,THOMAS2}). The H\"older regularities are given with a precision of $10^{-4}$.

%The algorithm for the construction only changes with respect to \eqref{eq:dual_char_even} and \eqref{eq:dual_char_odd}, but it is technically the same for even or odd arity $m$, and relies on the solution of a possibly rank deficient linear system. The goal is to get the mask $\mathbf{a}=[a_\ell]_{\ell\in\mathbb{Z}}$ of a subdivision scheme, i.e. the coefficients of the sub-symbols $A_k(z)$, $k\in\mathbb{Z}$.

\subsection{General Strategy}

The linear system to be solved arises from matching the coefficients of the Laurent polynomial in the left- and right-hand side of \eqref{eq:dual_char_odd} (when $m$ is odd) or \eqref{eq:dual_char_even} (when $m$ is even). The result is described in the following Proposition.

\begin{prop}
	Let $m\in\mathbb{N}$, $m>3$. Necessary condition for a bi-infinite vector $\mathbf{a}$ to be the mask of an $m$-ary dual interpolatory scheme with basic limit function $\varphi$ with prescribed values $\left \{ \varphi \left ( \frac{2k+1}{2} \right ) \right \}_{k\in\mathbb{Z}}$ is to be the solution of the linear system
	\begin{equation} \label{eq:lin_sys}
		\mathbf{M}\;\mathbf{a}\;=\;\mathbf{c},
	\end{equation}
	where
	\begin{equation} \label{eq:mat_M}
		\mathbf{M}(\alpha,\beta)\;=\;\varphi\left(\frac{m\alpha+1}{2}-\beta\right),\quad\alpha,\beta\in\mathbb{Z},
	\end{equation}
	and
	\begin{equation} \label{eq:vec_C}
		\mathbf{c}(\alpha)\;=\;\varphi\left(\frac{\alpha}{2}\right),\quad \alpha\in\mathbb{Z}.
	\end{equation}
\end{prop}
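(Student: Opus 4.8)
The plan is to derive the linear system \eqref{eq:lin_sys} directly by matching coefficients in the characterization identities \eqref{eq:dual_char_odd} (for $m$ odd) and \eqref{eq:dual_char_even} (for $m$ even). The key observation is that both identities have the same structural form: a left-hand side depending only on the prescribed values $\{\varphi((2k+1)/2)\}_{k\in\mathbb{Z}}$ through the polynomials $\Phi_{2,2\gamma+1}$, and a right-hand side that is \emph{linear} in the sub-symbols $A_\beta$, hence linear in the mask $\mathbf{a}$. Since the left-hand sides of \eqref{eq:dual_char_odd} and \eqref{eq:dual_char_even} are identical, and the right-hand sides differ only in which combination of sub-symbols multiplies each $\Phi_{2,2\gamma+1}(z)$, I expect both cases to collapse into a single unified system once the coefficients are read off. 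First I would expand the right-hand side of the relevant identity using the definitions \eqref{eq:sub_sym_A} and \eqref{eq:sub_sym_P}, writing out $A_\beta(z^2)\,\Phi_{2,2\gamma+1}(z)$ as an explicit double sum over the mask entries $a_k$ and the values $\varphi(mn+(2\gamma+1)/2)$.

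Next I would extract the coefficient of a generic power of $z$ on both sides. The cleaner route is to bypass the sub-symbol bookkeeping entirely: rather than tracking the modular index conditions on $\beta$ and $\gamma$ separately, I would recombine the right-hand side back into a single sum involving the full symbol. The identities \eqref{eq:dual_char_odd} and \eqref{eq:dual_char_even} were obtained precisely by collecting the terms of \eqref{eq:lemma} that survive the modular constraints, and \eqref{eq:lemma} itself came from \eqref{eq:step2}. So the most economical derivation reverts to \eqref{eq:step2} with $\tau=\tfrac12$, $T=2$, which reads
\[
	\sum_{\gamma=0}^{2m-1}\Phi_{2,\gamma}(z^m)\;=\;m\,z^{-1}\sum_{\beta=0}^{m-1}\sum_{\substack{\gamma=0\\\gamma+2\beta\equiv_m 1}}^{2m-1}A_\beta(z^2)\,\Phi_{2,\gamma}(z),
\]
and using $\varphi(k)=\delta_{0,k}$ on the left to reduce $\sum_\gamma\Phi_{2,\gamma}(z^m)$ to $\tfrac12+\sum_\gamma\Phi_{2,2\gamma+1}(z^m)$. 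The right-hand side, fully expanded via \eqref{eq:sym} and \eqref{eq:sub_sym_P}, is nothing but $m\,z^{-1}\,a(\tfrac{\omega}{2})\cdot\tfrac1T\sum_h\varphi(h/2)e^{\cdots}$ reassembled, so the coefficient of $z^{\alpha}$ on the right is a single convolution $\sum_k a_k\,\varphi\!\left(\frac{m\alpha+1}{2}-k\right)$, which is exactly $(\mathbf{M}\mathbf{a})(\alpha)$ with $\mathbf{M}$ as in \eqref{eq:mat_M}. Matching this against the coefficient of $z^{\alpha}$ on the left, which is $\varphi(\alpha/2)=\mathbf{c}(\alpha)$ by \eqref{eq:vec_C}, yields \eqref{eq:lin_sys}.

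The parity split into odd and even $m$ should not affect the final system: it only governs how the surviving $(\beta,\gamma)$ pairs are labeled in \eqref{eq:dual_char_odd} versus \eqref{eq:dual_char_even}, but the aggregate convolution $\sum_k a_k\,\varphi\!\left(\frac{m\alpha+1}{2}-k\right)$ is parity-independent, which is why the Proposition can be stated uniformly for all $m>3$. The main obstacle I anticipate is purely bookkeeping: keeping the factor $m\,z^{-1}$, the scaling $z=e^{-2\pi i\omega/2}$, and the $z^2$ versus $z$ arguments consistent while converting from the sub-symbol form back to a plain convolution, and verifying that the index shift by $1$ (from the $z^{-1}$ and the odd index $2\gamma+1$) lands correctly as the $\tfrac{m\alpha+1}{2}$ appearing in \eqref{eq:mat_M}. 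Once the coefficient identification is set up carefully, the equality of the two sides is a routine matching of Laurent coefficients, and the ``necessary condition'' phrasing is automatic since we have only used the characterization identity, which is itself necessary for a dual interpolatory scheme.
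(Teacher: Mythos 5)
Your proposal is correct, and it follows a genuinely different route from the paper's. The paper works with the parity-split characterizations \eqref{eq:dual_char_odd} and \eqref{eq:dual_char_even}: it expands the two right-hand sides separately into two different matrices $\mathbf{M}_{odd}$ and $\mathbf{M}_{even}$ (equations \eqref{eq:M_odd} and \eqref{eq:M_even}), discards the rows indexed by $\alpha\notin m\mathbb{Z}$, which are identically zero, and only then merges the two cases into the single matrix \eqref{eq:mat_M} by invoking $\varphi(k)=\delta_{0,k}$. You instead revert to the unified necessary condition \eqref{eq:step2}; this is legitimate, since \eqref{eq:step2} holds for every convergent dual scheme by Theorem \ref{thm:alg_char}, and it eliminates the odd/even case distinction altogether. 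The one point you must make precise is the ``reassembly'' step: the right-hand side of \eqref{eq:step2} is \emph{not} the full product $m z^{-1}A(z^2)\,\sum_{\gamma=0}^{2m-1}\Phi_{2,\gamma}(z)$, but only the portion of it obeying the constraint $\gamma+2\beta\equiv_m 1$. Your coefficient extraction survives because a generic term $a_{mj+\beta}\,\varphi\bigl(mk+\tfrac{\gamma}{2}\bigr)\,z^{2m(j+k)+2\beta+\gamma-1}$ has exponent congruent to $2\beta+\gamma-1$ modulo $m$, so the constrained sum is exactly the projection of the full product onto the powers $z^{m\alpha}$, $\alpha\in\mathbb{Z}$, on which the left-hand side is also supported; matching the coefficient of $z^{m\alpha}$ (note: $z^{m\alpha}$, not $z^{\alpha}$ --- the row index $\alpha$ of \eqref{eq:lin_sys} labels the power $m\alpha$, exactly as in the paper's passage from $\widetilde{\mathbf{M}}$ to $\mathbf{M}$) then gives $\tfrac12\,\varphi\bigl(\tfrac{\alpha}{2}\bigr)=\tfrac12\sum_{\ell}a_\ell\,\varphi\bigl(\tfrac{m\alpha+1}{2}-\ell\bigr)$, i.e. \eqref{eq:lin_sys} after cancelling the factor $\tfrac12$. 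As for what each approach buys: the paper's proof recycles the characterization theorems it has just established and produces the explicit intermediate matrices exploited in its subsequent remarks (e.g. that for odd $m$ the samples of $\varphi$ appear directly as entries of the mask), while yours is shorter, uniform in $m$, and makes transparent the paper's own observation that \eqref{eq:lin_sys} is nothing but the refinement equation \eqref{eq:ref_eq} evaluated at the points $\tfrac{\alpha}{2}$, $\alpha\in\mathbb{Z}$.
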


\begin{proof}
	We can rewrite \eqref{eq:dual_char_odd} and \eqref{eq:dual_char_even} in the following form
	\begin{equation} \label{eq:lin_sys_1}
		\mathbf{z}^T\;\widetilde{\mathbf{M}}\;\mathbf{a}\;=\;\mathbf{z}^T\;\widetilde{\mathbf{c}},
	\end{equation}
	where
	\[
		\mathbf{z}^T\;=\;\begin{bmatrix} &\dots,& z^{-1}, & 1, & z, & \dots& \end{bmatrix}.
	\]
	%$\mathbf{a}$ is the wanted mask and $\widetilde{\mathbf{M}}$ and $\widetilde{\mathbf{c}}$ are respectively the matrix and the constant term of the linear system which only depend on the values $\{\varphi(k+1/2)\}_{k\in\mathbb{Z}}$.
	
	Since the left-hand sides of \eqref{eq:dual_char_odd} and \eqref{eq:dual_char_even} coincide, $\widetilde{\mathbf{c}}$ is the same in both cases. In particular,
	\[\begin{array}{rcl} \frac{1}{2}\;+\;\sum_{\gamma=0}^{m-1}\;\Phi_{2,2\gamma+1}(z^{m})&=&\frac{1}{2}\;+\;\frac{1}{2}\;\sum_{\gamma=0}^{m-1}\;\sum_{k\in\mathbb{Z}}\;\varphi\left(mk+\frac{2\gamma+1}{2}\right)\;z^{m(2mk+2\gamma+1)}\\ \\
		&=&\frac{1}{2}\;+\;\frac{1}{2}\;\sum_{\alpha\in m(2\mathbb{Z}+1)}\;\varphi\left(\frac{\alpha}{2m}\right)\;z^\alpha,
	\end{array}\]
	which leads to $\widetilde{\mathbf{c}}=[c_\alpha]_{\alpha\in\mathbb{Z}}$ with
	\begin{equation} \label{eq:constant_term}
		c_\alpha\;=\;\left\{\begin{array}{cl}
			\frac{1}{2},&\textrm{ if } \alpha=0,\\ \\
			\frac{1}{2}\varphi\left(\frac{\alpha}{2m}\right),&\textrm{ if } \alpha\in m(2\mathbb{Z}+1),\\ \\
			0,&\textrm{ otherwise.}
		\end{array}\right.
	\end{equation}
	
	For the matrix $\widetilde{\mathbf{M}}$ we have to consider two different cases: $\mathbf{M}_{odd}$ and $\mathbf{M}_{even}$,  respectively for \eqref{eq:dual_char_odd} and \eqref{eq:dual_char_even}. Starting from the case $m\in2\mathbb{N}+1$, the right-hand side of \eqref{eq:dual_char_odd} leads to
	\[\scalebox{0.9}{$\begin{array}{l}
		m\;z^{-1}\;\left(\;\frac{A_{\frac{m+1}{2}}(z^2)}{2}\;+\;\sum_{\gamma=0}^{m-1} \;A_{m-\gamma}(z^2)\;\Phi_{2,2\gamma+1}(z)\;\right)\;=\; \\ \\
		\qquad =\;mz^{-1}\;\left(\;\frac{1}{2m}\;\sum_{k\in\mathbb{Z}}\;a_{mk+\frac{m+1}{2}}\;z^{2mk+m+1}\;+\;\frac{1}{2m}\;\sum_{\gamma=0}^{m-1} \;\sum_{j,k\in\mathbb{Z}}\;a_{mk+m-\gamma}\;\varphi\left(mj+\frac{2\gamma+1}{2}\right)\;z^{2m(j+k+1)+1}\;\right) \\ \\
		\qquad=\;\frac{1}{2}\;\sum_{\alpha\in m(2\mathbb{Z}+1)}\;a_{\frac{\alpha+1}{2}}\;z^{\alpha}\;+\;\frac{1}{2}\;\sum_{\alpha\in2m\mathbb{Z}}\;\sum_{\beta\in\mathbb{Z}}\;a_{\beta}\;\varphi\left(\frac{\alpha+1}{2}-\beta\right)\;z^{\alpha},%2m(j+k+1)
	\end{array}$}\]
	which translates to
	\begin{equation} \label{eq:M_odd}
		\mathbf{M}_{odd}(\alpha,\beta)\;=\;\left\{\begin{array}{cl}
			\frac{1}{2},&\textrm{ if } \alpha\in m(2\mathbb{Z}+1) \textrm{ and } \beta=\frac{\alpha+1}{2},\\ \\
			\frac{1}{2}\;\varphi\left(\frac{\alpha+1}{2}-\beta\right),&\textrm{ if } \alpha\in 2m\mathbb{Z},\\ \\
			0,&\textrm{ otherwise,}
		\end{array}\right. \qquad \alpha,\beta\in\mathbb{Z}.
	\end{equation}
	
	Instead, for $m\in2\mathbb{N}$, from the right-hand side of \eqref{eq:dual_char_even}, we get
	\[\scalebox{0.85}{$\begin{array}{l}
		m\;z^{-1}\;\sum_{\gamma=0}^{m-1} \;\left( \;A_{\frac{m}{2}-\gamma}(z^2)\;+\;A_{m-\gamma}(z^2)\;\right) \;\Phi_{2,2\gamma+1}(z)\;= \\ \\
		%\qquad =\;mz^{-1}\;\left[\;\frac{1}{2m}\;\sum_{\gamma=0}^{m-1} \;\sum_{j,k\in\mathbb{Z}}\;a_{mk+\frac{m}{2}-\gamma}\;\varphi\left(mj+\frac{2\gamma+1}{2}\right)\;z^{m(2j+2k+1)+1}\;+\;\frac{1}{2m}\;\sum_{\gamma=0}^{m-1} \;\sum_{j,k\in\mathbb{Z}}\;a_{mk+m-\gamma}\;\varphi\left(mj+\frac{2\gamma+1}{2}\right)\;z^{2m(j+k+1)+1}\;\right] \\ \\
		\qquad =\;\frac{1}{2}\;\sum_{\gamma=0}^{m-1} \;\sum_{j,k\in\mathbb{Z}}\;a_{mk+\frac{m}{2}-\gamma}\;\varphi\left(mj+\frac{2\gamma+1}{2}\right)\;z^{m(2j+2k+1)}\;+\;\frac{1}{2}\;\sum_{\gamma=0}^{m-1} \;\sum_{j,k\in\mathbb{Z}}\;a_{mk+m-\gamma}\;\varphi\left(mj+\frac{2\gamma+1}{2}\right)\;z^{2m(j+k+1)}\\ \\
		\qquad=\;\frac{1}{2}\;\sum_{\alpha\in m(2\mathbb{Z}+1)}\;\sum_{\beta\in\mathbb{Z}}\;a_{\beta}\;\varphi\left(\frac{\alpha+1}{2}-\beta\right)\;z^{\alpha}\;+\;\frac{1}{2}\;\sum_{\alpha\in2m\mathbb{Z}}\;\sum_{\beta\in\mathbb{Z}}\;a_{\beta}\;\varphi\left(\frac{\alpha+1}{2}-\beta\right)\;z^{\alpha} \\ \\
		\qquad=\;\frac{1}{2}\;\sum_{\alpha\in m\mathbb{Z}}\;\sum_{\beta\in\mathbb{Z}}\;a_{\beta}\;\varphi\left(\frac{\alpha+1}{2}-\beta\right)\;z^{\alpha}.
	\end{array}$}\]
	Thus,
	\begin{equation} \label{eq:M_even}
		\mathbf{M}_{even}(\alpha,\beta)\;=\;\left\{\begin{array}{cl}
			\frac{1}{2}\;\varphi\left(\frac{\alpha+1}{2}-\beta\right),&\textrm{ if } \alpha\in m\mathbb{Z},\\ \\
			0,&\textrm{ otherwise,}
		\end{array}\right. \qquad \alpha,\beta\in\mathbb{Z}.
	\end{equation}
	
	Since \eqref{eq:lin_sys_1} must hold for every unitary $z$, the mask $\mathbf{a}$ must satisfy the linear system
	\begin{equation} \label{eq:lin_sys_2}
		\widetilde{\mathbf{M}}\;\mathbf{a}\;=\;\widetilde{\mathbf{c}}.
	\end{equation}
	Now we observe that all the rows in \eqref{eq:constant_term}, \eqref{eq:M_odd} and \eqref{eq:M_even} with index $\alpha\not\in m\mathbb{Z}$ are negligible since they contain only zero. Thus, recalling the interpolation property of $\varphi$, we can reduce \eqref{eq:lin_sys_2} to \eqref{eq:lin_sys}, where we eliminated the factor $\frac{1}{2}$ from both sides of \eqref{eq:lin_sys_2}.
\end{proof}

\begin{rem}
	The linear system \eqref{eq:lin_sys} coincides with the evaluations of the refinement equation \eqref{eq:ref_eq} at the points $\frac{\alpha}{2}$, $\alpha\in\mathbb{Z}$. Note that, since $\varphi$ is compactly supported, the matrix $\mathbf{M}$ is a band limited matrix.
\end{rem}

\begin{rem}
	If $m\in 2\mathbb{N}+1$, it follows from \eqref{eq:lin_sys_2}, \eqref{eq:constant_term} and the first line of \eqref{eq:M_odd} that the samples of $\varphi$ appear directly as elements of the mask $\mathbf{a}$.
\end{rem}

The linear system in \eqref{eq:lin_sys}, however, only encodes equations \eqref{eq:dual_char_odd} and \eqref{eq:dual_char_even}, without giving any clue about the eventual convergence of the scheme associated to the mask $\mathbf{a}$. The first thing we can do in this direction is to impose the necessary condition for convergence (see, e.g., \cite{MR1079033}), i.e.
\begin{equation} \label{eq:nec_cond}	
	A_\gamma(1)\;=\;\frac{1}{m},\quad \gamma=1,\dots,m
	\quad\Longleftrightarrow\quad
	\sum_{k\in\mathbb{Z}}\;a_{mk+\gamma}\;=\;1, \quad \gamma=1,\dots,m.
\end{equation}
We can then enlarge the system \eqref{eq:lin_sys} with $m$ rows to include \eqref{eq:nec_cond} in the following way:
\begin{equation} \label{eq:lin_sys_4}
	\begin{bmatrix}\mathbf{M}\\\mathbf{N}\end{bmatrix}\;\mathbf{a}\;=\;\begin{bmatrix}\mathbf{c}\\\mathbf{1}\end{bmatrix},
\end{equation}
where
\[
	\mathbf{N}(\alpha,\beta)\;=\;\left\{\begin{array}{cl}
		1,&\textrm{ if } \beta\equiv_m \alpha,\\ \\
		0,&\textrm{ otherwise,}
	\end{array}\right. \quad \alpha=1,\dots,m,\;\beta\in\mathbb{Z}.
\]

When talking about interpolatory schemes, recalling \eqref{eq:symbol}, it is well known that the condition
\begin{equation} \label{eq:A_factorization}
	A(z)\;=\;\left(\frac{1+\dots+z^{m-1}}{m}\right)^d\;B(z),
\end{equation}
for some $d\in\mathbb{N}$ and Laurent polynomial $B(z)=\sum_{k\in\mathbb{Z}}b_k z^k$, is, on one hand, necessary for $\varphi$ to belong to $\mathcal{C}^{d-1}(\mathbb{R})$ and, on the other hand, sufficient for the scheme to reproduce any arbitrary polynomial $\pi$ of degree $d-1$, i.e. to satisfy
\[
	\pi(x)\;=\;\sum_{k\in\mathbb{Z}}\;\pi(k)\;\varphi(x-k),\quad x\in\mathbb{R},\; \; \pi\in\mathbb{P}_{d-1}(\mathbb{R}),
\]
see, e.g., \cite{MR2775138,MR2843037,MR3071114}. These requirements are rather fundamental and can be easily integrated in the system \eqref{eq:lin_sys_4}. Given $d\in\mathbb{N}$, due to \eqref{eq:A_factorization}, we can shift our goal from computing $\mathbf{a}$ to computing $\mathbf{b}$ solving the following linear system
\begin{equation} \label{eq:lin_sys_5} \frac{1}{m^{d-1}}\;\begin{bmatrix}\mathbf{M}\\\mathbf{N}\end{bmatrix}\;\mathbf{O}^d\;\mathbf{b}\;=\;\begin{bmatrix}\mathbf{c}\\\mathbf{1}\end{bmatrix},
\end{equation}
where
\begin{equation} \label{eq:mat_O}
	\mathbf{O}(\alpha,\beta)\;=\;\left\{\begin{array}{cl}
		1,&\textrm{ if } 0\leq\alpha-\beta\leq m-1,\\ \\
		0,&\textrm{ otherwise,}
	\end{array}\right. \quad \alpha,\beta\in\mathbb{Z}.
\end{equation}

Now we are left with two last degrees of freedom:
\begin{itemize}
	\item[i)] the choice of $k^*\in\mathbb{N}$ such that $\supp(\mathbf{a})=\{1-k^*,\dots,k^*\}$, which we recall is linked to $supp(\varphi)$ in the following way:
	\begin{equation} \label{eq:supp_phi}
		\supp(\varphi)\;=\;\left[\;\frac{1-2k^*}{2(m-1)},\;\frac{2k^*-1}{2(m-1)}\;\right],
	\end{equation}
	see, e.g., \cite{MR2775138};
	\item[ii)] the choice of the values $\varphi \left( \frac{2k+1}{2} \right)$ for $k\in\mathbb{Z}$.
\end{itemize}

Choosing $k^*\in\mathbb{N}$ in i) allows us to cut the infinite linear system in \eqref{eq:lin_sys_5} into a finite one.

\begin{prop} \label{prop:size}
	Let $k^*\in\mathbb{N}$. The bi-infinite linear system in \eqref{eq:lin_sys_5} is actually of dimension
	\[
		\left( \;\left\lfloor\;\frac{2k^*-1}{m-1}\;\right\rfloor\;-\;\left\lceil\;\frac{1-2k^*}{m-1}\;\right\rceil\;+\;m\;+\;1\;\right) \;\times\; \Big ( \; 2k^*\;-\;d(m-1)\; \Big ).
	\]
	In particular, \eqref{eq:lin_sys_5} is equivalent to the cut given by
	\begin{equation} \label{eq:red_lin_sys}
		\frac{1}{m^{d-1}}\;\begin{bmatrix} (\mathbf{M}\mathbf{O}^d)(\;\alpha_\ell:\alpha_r\;,\;\beta_\ell:\beta_r\;)\\ (\mathbf{N}\mathbf{O}^d)(\;:\;,\;\beta_\ell:\beta_r\;)\end{bmatrix}\;\mathbf{b}(\;\beta_\ell:\beta_r\;)\;=\;\begin{bmatrix} \mathbf{c}(\;\alpha_\ell:\alpha_r\;) \\ \mathbf{1}\end{bmatrix},
	\end{equation}
	where
	\[
		\alpha_\ell\;:=\;\left\lceil\;\frac{1-2k^*}{m-1}\;\right\rceil,\quad \alpha_r\;:=\;\left\lfloor\;\frac{2k^*-1}{m-1}\;\right\rfloor,\quad \beta_\ell\;:=\;1-k^*,\quad \beta_r\;:=\;k^*-d(m-1).
	\]	
\end{prop}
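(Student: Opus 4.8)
The plan is to treat the two dimensions of the cut separately — first the columns (the support of $\mathbf{b}$), then the rows (the active equations) — and in both cases to exploit the compact support of $\varphi$ together with the convolution structure of $\mathbf{O}$. For the columns, I would start from the reparametrization $\mathbf{a}=\frac{1}{m^{d-1}}\mathbf{O}^d\mathbf{b}$ induced by \eqref{eq:A_factorization} and \eqref{eq:lin_sys_5}. The operator $\mathbf{O}$ in \eqref{eq:mat_O} acts as a convolution with the indicator of $\{0,\dots,m-1\}$, so that $(\mathbf{O}\mathbf{b})(\alpha)=\sum_{\beta=\alpha-(m-1)}^{\alpha}\mathbf{b}(\beta)$; hence each application of $\mathbf{O}$ enlarges the right end of the support by exactly $m-1$ while leaving the left end fixed. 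Iterating, $\mathbf{O}^d$ maps a sequence supported on $\{\beta_\ell,\dots,\beta_r\}$ to one supported on $\{\beta_\ell,\dots,\beta_r+d(m-1)\}$. Imposing $\supp(\mathbf{a})=\{1-k^*,\dots,k^*\}$ therefore forces $\beta_\ell=1-k^*$ and $\beta_r+d(m-1)=k^*$, i.e. $\beta_r=k^*-d(m-1)$, and the number of unknowns is $\beta_r-\beta_\ell+1=2k^*-d(m-1)$, which is the claimed column count and column range.

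For the rows, I would show that outside the index window $[\alpha_\ell,\alpha_r]$ the rows of the $\mathbf{M}\mathbf{O}^d$ block are identically zero and can be discarded. The right-hand side is $\mathbf{c}(\alpha)=\varphi(\alpha/2)$, which vanishes precisely when $\alpha/2\notin\supp(\varphi)$, and using \eqref{eq:supp_phi} this happens exactly when $\alpha<\lceil\frac{1-2k^*}{m-1}\rceil=\alpha_\ell$ or $\alpha>\lfloor\frac{2k^*-1}{m-1}\rfloor=\alpha_r$. For the coefficients, a row entry of the reduced block is a combination of values $\varphi\!\left(\frac{m\alpha+1}{2}-\beta\right)$ with $\beta\in\{1-k^*,\dots,k^*\}$; requiring $\frac{m\alpha+1}{2}-\beta\in\supp(\varphi)$ for some admissible $\beta$ and again invoking \eqref{eq:supp_phi} yields, after isolating $\alpha$, the same window $\alpha_\ell\le\alpha\le\alpha_r$. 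Thus for $\alpha\notin[\alpha_\ell,\alpha_r]$ both sides vanish and the equation reads $0=0$, so it may be dropped. The surviving rows of this block are the $\alpha_r-\alpha_\ell+1$ ones, to which the $m$ rows of the $\mathbf{N}\mathbf{O}^d$ block (fixed by the definition of $\mathbf{N}$) are appended, producing the stated row count and the truncation of \eqref{eq:lin_sys_5} to the finite system \eqref{eq:red_lin_sys}.

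The routine but delicate part is the floor/ceiling bookkeeping: one must check that $\frac{m\alpha+1}{2}-\beta\le\frac{2k^*-1}{2(m-1)}$ with $\beta\ge1-k^*$ rearranges to $\alpha\le\frac{2k^*-1}{m-1}$, and symmetrically that the lower support bound gives $\alpha\ge\frac{1-2k^*}{m-1}$, so that the half-integer grid on which $\varphi$ is sampled lines up cleanly with the integer endpoints $\alpha_\ell,\alpha_r$. The only genuine subtlety — and the point I would verify most carefully — is that the window obtained from the vanishing of the coefficients coincides with the one obtained from the vanishing of $\mathbf{c}$, so that no nontrivial equation is accidentally discarded during truncation. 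Once these two windows are shown to agree, the equivalence of \eqref{eq:lin_sys_5} and \eqref{eq:red_lin_sys}, and hence the stated dimension, is immediate.
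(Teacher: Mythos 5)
Your argument follows the same route as the paper's own proof: the column window $[\beta_\ell,\beta_r]$ is forced by $\supp(\mathbf{a})=\{1-k^*,\dots,k^*\}$ through the factorization \eqref{eq:A_factorization} (the paper phrases this via the first/last nonzero coefficients of $B(z)$, you via $\mathbf{O}$ acting as convolution with the indicator of $\{0,\dots,m-1\}$ --- these are the same observation, since polynomial multiplication is convolution of coefficient sequences), and the row window $[\alpha_\ell,\alpha_r]$ is obtained by showing that, on the kept columns, both the rows of $\mathbf{M}\mathbf{O}^d$ and the entries of $\mathbf{c}$ vanish outside it, using \eqref{eq:mat_M}, \eqref{eq:mat_O} and \eqref{eq:supp_phi}, so that the discarded equations all read $0=0$; the $m$ rows of $\mathbf{N}\mathbf{O}^d$ are then appended. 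The structure and all stated conclusions are correct. There is, however, one slip precisely in the step you flagged as delicate: to obtain the upper window bound $\alpha\le\frac{2k^*-1}{m-1}$ you must pair the \emph{upper} end of $\supp(\varphi)$ with the \emph{largest} admissible shift, i.e. the existence of $\beta\le k^*$ with $\frac{m\alpha+1}{2}-\beta\le\frac{2k^*-1}{2(m-1)}$ forces $\frac{m\alpha+1}{2}-k^*\le\frac{2k^*-1}{2(m-1)}$, which rearranges to the claim; pairing that inequality with $\beta\ge 1-k^*$, as you wrote, yields no upper bound on $\alpha$ at all, since for arbitrarily large $\alpha$ one can satisfy it by taking $\beta$ large. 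Symmetrically, the lower bound $\alpha\ge\frac{1-2k^*}{m-1}$ comes from pairing the lower end of $\supp(\varphi)$ with $\beta\ge 1-k^*$. With the pairings corrected, the two windows (from the matrix entries and from $\mathbf{c}$) do coincide with $[\alpha_\ell,\alpha_r]$ exactly as you assert, and your proof matches the paper's.
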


\begin{proof}
	From \eqref{eq:A_factorization} it follows straightforwardly that, since the first and last non-zero elements of $\mathbf{a}$ are $a_{1-k*}$ and $a_{k^*}$ respectively, the first and last non-zero elements of $\mathbf{b}$ have to be $b_{1-k*}$ and $b_{k^*-d(m-1)}$ respectively. Thus we just need to consider the columns of the linear system with indices from $\beta_\ell=1-k^*$ to $\beta_r=k^*-d(m-1)$.
	
	Now, for the rows, we have to cut horizontally the system such that all the non-zero elements of the considered columns are preserved. In particular we need to know the support of the columns of $\mathbf{MO}^d$ and the support of $\mathbf{c}$. Starting from $\mathbf{c}$, which is straightforward, due to \eqref{eq:vec_C} and \eqref{eq:supp_phi} we have that
	\[
		\supp(\mathbf{c})\;\subseteq\;\left\{\; \left\lceil\frac{1-2k^*}{m-1}\right\rceil ,\;\dots,\; \left\lfloor\frac{2k^*-1}{m-1}\right\rfloor \;\right\}\;=\;\{\;\alpha_\ell,\;\dots,\;\alpha_r\;\}.
	\]
	On the other hand, from \eqref{eq:mat_M} and \eqref{eq:mat_O} we have that, for $\beta\in\mathbb{Z}$,
	\[
		\supp(\;\mathbf{M}(\;:\;,\;\beta\;)\;)\;\subseteq\;\left\{\;\left\lceil\frac{1-2k^*}{m(m-1)}+\frac{2\beta-1}{m}\right\rceil ,\;\dots,\; \left\lfloor\frac{2k^*-1}{m(m-1)}+\frac{2\beta-1}{m}\right\rfloor \;\right\},
	\]
	\[
		\supp(\;\mathbf{O}^d(\;:\;,\;\beta\;)\;)\;=\;\left\{\;\beta\;,\;\beta+d(m-1)\;\right\},
	\]
	and thus
	\[
		\supp(\;\mathbf{MO}^d(\;:\;,\;\beta\;)\;)\;\subseteq\;\left\{\;\left\lceil\frac{1-2k^*}{m(m-1)}+\frac{2\beta-1}{m}\right\rceil ,\;\dots,\; \left\lfloor\frac{2k^*-1}{m(m-1)}+\frac{2\beta+2d(m-1)-1}{m}\right\rfloor \;\right\}.
	\]
	Therefore,
	\[\begin{array}{rcl}
		\supp(\;\mathbf{MO}^d(\;:\;,\;\beta_\ell\;:\;\beta_r\;)\;)&\subseteq&\left\{\;\left\lceil\frac{1-2k^*}{m(m-1)}+\frac{2\beta_\ell-1}{m}\right\rceil ,\;\dots,\; \left\lfloor\frac{2k^*-1}{m(m-1)}+\frac{2\beta_r+2d(m-1)-1}{m}\right\rfloor \;\right\}\\ \\
		&=&\left\{\; \left\lceil\frac{1-2k^*}{m-1}\right\rceil ,\;\dots,\; \left\lfloor\frac{2k^*-1}{m-1}\right\rfloor \;\right\}\;=\;\{\;\alpha_\ell,\;\dots,\;\alpha_r\;\},
	\end{array}\]
	and this concludes the proof.
\end{proof}

As a consequence of Proposition \ref{prop:size}, the linear system \eqref{eq:lin_sys_5} is, in general, rectangular and can present all possible scenarios: no solutions, unique solution or a family of solutions described by one or more parameters. However, both the number of equations of the system and the number of unknowns can be halved by requiring the mask $\mathbf{a}$ (and thus $\mathbf{b}$) to be symmetric.

Last but not least is the choice ii). This is influenced by $d$. Indeed, since the polynomial reproduction must hold even if we sample $\pi$ at $\mathbb{Z}/2$ rather than at $\mathbb{Z}$ (the set of polynomials of degree up to $d-1$ is closed under dilations), we can obtain a subdivision scheme with the desired reproduction property only if we choose as $\varphi \left( \frac{2k+1}{2} \right)$, $k\in\mathbb{Z}$, the same values of the basic limit function of a binary primal interpolatory scheme with equal or higher degree of polynomial reproduction. In what follows, we use the values given by the Dubuc-Deslauriers $2n$-point schemes, since they reach polynomial reproduction of degree $2n-1$ within the shortest possible support (see, e.g., \cite{MR982724,MR1184153}). Since Dubuc-Deslauriers schemes are symmetric and symmetry is a property usually required in many applications, we will always impose symmetry in all the following examples.

\subsection{A $3$-ary Dual Interpolatory Subdivision Scheme}

We start choosing $m=3$. As a good benchmark, the first construction is aimed at obtaining a subdivision scheme with basic limit function $\varphi\in\mathcal{C}^2(\mathbb{R})$ that reproduces cubic polynomials and has small support. To achieve that we choose the values of $\varphi$ at the half-integers to be the same as those of the Dubuc-Deslauriers $4$-point scheme, i.e.,
\begin{equation} \label{eq:DD4_samp}
	\left \{ \varphi \left( \frac{k}{2} \right) \right\}_{k=-3}^{3}\;=\;\frac{1}{16}\;\{-1, \, 0, \, 9, \, 16, \, 9, \, 0, \, -1\}
\end{equation}
and $0$ over the other half-integers. This implies that
\[
	\frac{2k^*-1}{m-1}\;=\;|\supp(\varphi)|\;\geq\;3\quad\Longrightarrow\quad k^*\;\geq\;\frac{7}{2}.
\]
Moreover, to get reproduction of cubic polynomials we need $d=4$ and thus, by \eqref{eq:A_factorization},
\[
	1-k^*\;<\;k^*-d(m-1) \quad\Longrightarrow\quad k^*\;>\;\frac{d(m-1)+1}{2}\;=\;\frac{9}{2}.
\]
It turns out that for $k^*\in\{5,6\}$ the associated linear system has no symmetric solutions. On the other hand, for $k^*=7$, i.e. $|\supp(\varphi)|=6.5$, the corresponding linear system
\[
	\begin{bmatrix}
		-\frac{1}{432},&   \frac{5}{432},&  \frac{35}{432} \smallskip \\
		\frac{1}{27},&    \frac{4}{27},&    \frac{10}{27} \smallskip\\
		\frac{1}{3},&     \frac{1}{2},&     \frac{2}{3} \smallskip\\
		\frac{26}{27},&   \frac{23}{27},&   \frac{17}{27} \smallskip\\
		\frac{289}{216},& \frac{211}{216},& \frac{109}{216} \smallskip\\
		2, & 2, & 2
	\end{bmatrix}\;\begin{bmatrix}
		b_1\\
		b_2\\
		b_3
	\end{bmatrix}\;=\;\begin{bmatrix}
		0 \smallskip\\
		-\frac{1}{16} \smallskip\\
		0 \smallskip\\
		\frac{9}{16} \smallskip\\
		1 \smallskip\\
		1
	\end{bmatrix},
%	\begin{bmatrix}
%		-1/864,&   5/864,&  35/864\\
%	    1/54,&    2/27,&    5/27\\
%	    1/6,&     1/4,&     1/3\\
%	   	13/27,&   23/54,&   17/54\\
%	 	289/432,& 211/432,& 109/432\\
%	 	2 & 2 & 2
%	 \end{bmatrix}\;\begin{bmatrix}
%	 	b_1\\
%	 	b_2\\
%	 	b_3
%	 \end{bmatrix}\;=\;\begin{bmatrix}
%	 	0 \\
%	 	-1/32 \\
%	 	0 \\
%	 	9/32 \\
%	 	1/2 \\
%	 	1
%	 \end{bmatrix},
\]
where
\[
	A(z)\;=\;\frac{1}{3}\;\sum_{k=-6}^7\;a_k\;z^k\;=\;z^{-6}\;\left(\frac{1+z+z^2}{3}\right)^4\;\left(b_3 + b_2 z + b_1 z^2 + b_1 z^3 + b_2 z^4 + b_3 z^5\right),
\]
has a unique symmetric solution which leads to the subdivision mask
\begin{equation} \label{eq:3ary_mask}
	 \scalebox{0.95}{$
	 \{a_k\}_{k=-6}^{7} \;=\; \scalebox{0.8}{$
		\left \{
			\frac{13}{1296}, \, -\frac{11}{648}, \, -\frac{1}{16}, \, -\frac{107}{1296}, \, \frac{179}{1296}, \, \frac{9}{16}, \, \frac{137}{144}, \, \frac{137}{144}, \, \frac{9}{16}, \, \frac{179}{1296}, \, -\frac{107}{1296}, \, -\frac{1}{16}, \, -\frac{11}{648}, \, \frac{13}{1296}
		\right \}.
	$}
	%\frac{1}{1296}\left[\;13,\; -22,\; -81,\; -107,\; 179,\; 729,\; 1233,\; 1233,\; 729,\; 179,\; -107,\; -81,\; -22,\; 13\;\right]$.}
	$}
\end{equation}

It can be shown that the corresponding refinable function $\varphi$, depicted in Figure \ref{fig:3ary}, belongs to $\mathcal{C}^{2.2760}(\mathbb{R})$.
With respect to the binary Dubuc-Deslauriers $4$-point scheme, this loses the step-wise interpolation and has a basic limit function with a slightly wider support, but it gains in regularity and in a faster rate of convergence due to the higher arity. Even if there are three submasks, two of them are actually the same, one being the flipped version of the other, and they are rather small, with the biggest one having support $5$.
This subdivision scheme has also higher performance with respect to the primal Dubuc-Deslauriers $4$-point ternary scheme, which reproduces cubic polynomials but is only $\mathcal{C}^{1.8173}(\mathbb{R})$ \cite{MR3702925}, and with respect to the family of $4$-point schemes proposed in \cite{MR1879678}, for which the most regular scheme belongs to $\mathcal{C}^{2.1816}(\mathbb{R})$ but reproduces only quadratic polynomials. Note that the superior properties of the dual scheme are achieved by enlarging the support of the primal ternary subdivision mask with the inclusion of three additional elements only (cf. \eqref{eq:3ary_mask}).

\begin{figure}[h!]
	\centering
	\includegraphics[scale=0.5]{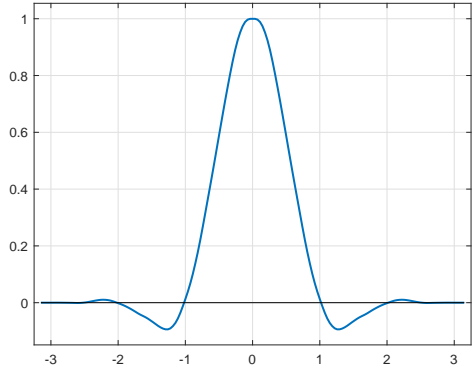}
	\caption{The basic limit function of the ternary dual interpolatory scheme corresponding to the mask in \eqref{eq:3ary_mask}, which is supported in $[-3.25,3.25]$, reproduces cubic polynomials and belongs to $\mathcal{C}^{2.2760}(\mathbb{R})$.}
\label{fig:3ary}
\end{figure}

%\begin{figure}[h!]
%	\centering
%	\includegraphics[width=\textwidth]{sab_vs_our.eps}
%	\caption{Comparison with ternary primal interpolatory scheme in [...] for $param=$.}
%	\label{fig:confronto_sabin}
%\end{figure}

\subsection{A One-Parameter Family of $5$-ary Dual Interpolatory Subdivision Schemes}

We now consider $m=5$. Sticking to the samples of the Dubuc-Deslauriers $4$-point scheme \eqref{eq:DD4_samp}, we now try to obtain a symmetric subdivision scheme with smaller support, e.g. $|\supp(\varphi)|\leq \frac{19}{4} =4.75$ which means a mask with at most $20$ entries, sacrificing a degree of polynomial reproduction. The corresponding linear system then happens to be rank-deficient and solving it leads to a family of masks depending on the parameter $w\in\mathbb{R}$:
\begin{equation} \label{eq:5ary_mask}
	\scalebox{0.83}{$\begin{array}{rcl}
		\{a_k\}_{k=-9}^{10}&=& \left\{\;\frac{w}{400},\; \frac{9\,w}{400}, \; -\frac{1}{16}, \; -\frac{9\,w}{400}-\frac{21}{200}, \; -\frac{w}{400}-\frac{9}{200}, \; \frac{11}{200}-\frac{3\,w}{400}, \; \frac{39}{200}-\frac{27\,w}{400}, \; \frac{9}{16}, \; \frac{27\,w}{400}+\frac{91}{100}, \; \frac{3\,w}{400}+\frac{99}{100},\right. \\ \\
		& & \left. \frac{3\,w}{400}+\frac{99}{100}, \; \frac{27\,w}{400}+\frac{91}{100}, \; \frac{9}{16}, \; \frac{39}{200}-\frac{27\,w}{400}, \; \frac{11}{200}-\frac{3\,w}{400}, \; -\frac{w}{400}-\frac{9}{200}, \; -\frac{9\,w}{400}-\frac{21}{200}, \; -\frac{1}{16}, \; \frac{9\,w}{400}, \; \frac{w}{400}\; \right\}.
		
		%\frac{1}{400}\;\left[\; w, \; 9w, \; -25, \; -9w-42, \; -w-18, \; 22-3w, \; 78-27w, \; 225, \; 27w+364, \; 3w+396, \right. \\ \\
		%&&\qquad\left. 3w+396, \; 27w+364, \; 225, \; 78-27w, \; 22-3w, \; -w-18, \; -9w-42, \; -25, \; 9w, \; w \;\right]
	\end{array}$}
\end{equation}
Analysing the infinity norm of the difference scheme for three levels (cf. Figure \ref{fig:5ary_reg} and see, e.g., \cite{CHOI2006351,MR1172120,MR2008967} for details) gives the  sufficient conditions in Table \ref{tab_rangew} for $\varphi$ belonging to $\mathcal{C}^k(\mathbb{R})$.
\begin{table}[h!]
	\centering
	\begin{tabular}{|c|c|}
		\hline
		$k$ & range \\
		\hline
		0 & $-14.4545\;\leq\;w\;\leq\;   11.7273$ \\
		1 & $-4.1983\;\leq\;w\;\leq\;    1.4711$ \\
		2 & $-1.5832 \;\leq\;w\;\leq\;  -1.0187$ \\
		\hline
	\end{tabular}
	\caption{Sufficient conditions on the free parameter $w$ for $\varphi$ belonging to $\mathcal{C}^k(\mathbb{R})$.}
\label{tab_rangew}
\end{table}

\smallskip
Figure \ref{fig:5ary_reg_jsr} shows the function $\varphi$ obtained with $w=-1.4$, whose regularity is, as expected, greater than $2$ and approximately $2.1568$. \\ %$\mathcal{C}^{}(\mathbb{R})$.\\
Summarizing, all the schemes described by the masks in \eqref{eq:5ary_mask}, with $-14.4545\leq w\leq11.7273$, are convergent, reproduce quadratic polynomials, but never cubics, and their basic limit functions attain the same values over $\mathbb{Z}/2$.
While not being top notch for what concerns polynomial reproduction, all the $\mathcal{C}^2(\mathbb{R})$ schemes of this family achieve a good regularity, together with the interpolatory property in a very short support, and also a really good looking shape, see e.g. Figure \ref{fig:5ary}. Moreover, the arity $5$ guarantees a very fast convergence.

\begin{figure}[h!]
	\centering
	\includegraphics[width=0.65\textwidth]{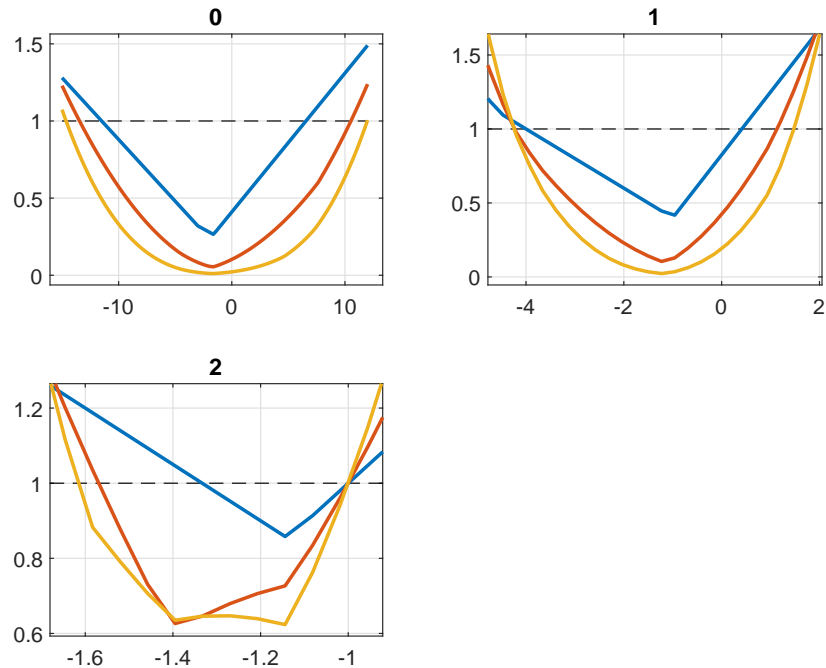}
	\caption{Regularity analysis of the family of $5$-ary masks in \eqref{eq:5ary_mask}. The three figures depict the analysis for $\mathcal{C}^0(\mathbb{R})$, $\mathcal{C}^1(\mathbb{R})$ and $\mathcal{C}^2(\mathbb{R})$ regularity, via estimates of the infinity norm of the first, second and third difference scheme respectively in blue, red and yellow. For each figure, on the $x$-axis we have the parameter $w$, while on the $y$-axis we have upper bounds for the infinity norm of the corresponding difference scheme given by one level, two level or three level of the difference matrix in blue, red and yellow respectively.}
	\label{fig:5ary_reg}
\end{figure}

\medskip
\begin{figure}[h!]
	\centering
	\includegraphics[width=0.38\textwidth]{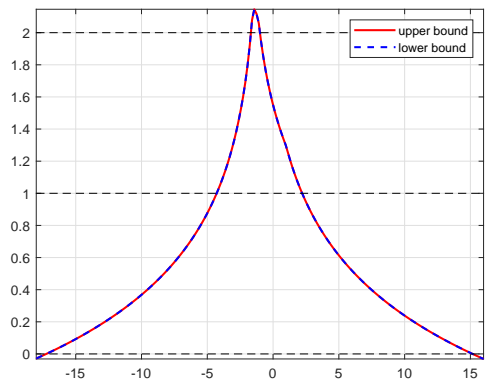}
	\caption{Regularity analysis via JSR of the family of $5$-ary masks in \eqref{eq:5ary_mask} for $w \in [-18, 16]$. The maximum regularity is approximately
	$2.1457$ achieved around $w=-1.4271$.}
	%$2.145770527431086$ achieved around $w=-1.427135678391960$.}}
	\label{fig:5ary_reg_jsr}
\end{figure}

\begin{figure}[h!]
	\centering
	\includegraphics[width=0.39\textwidth]{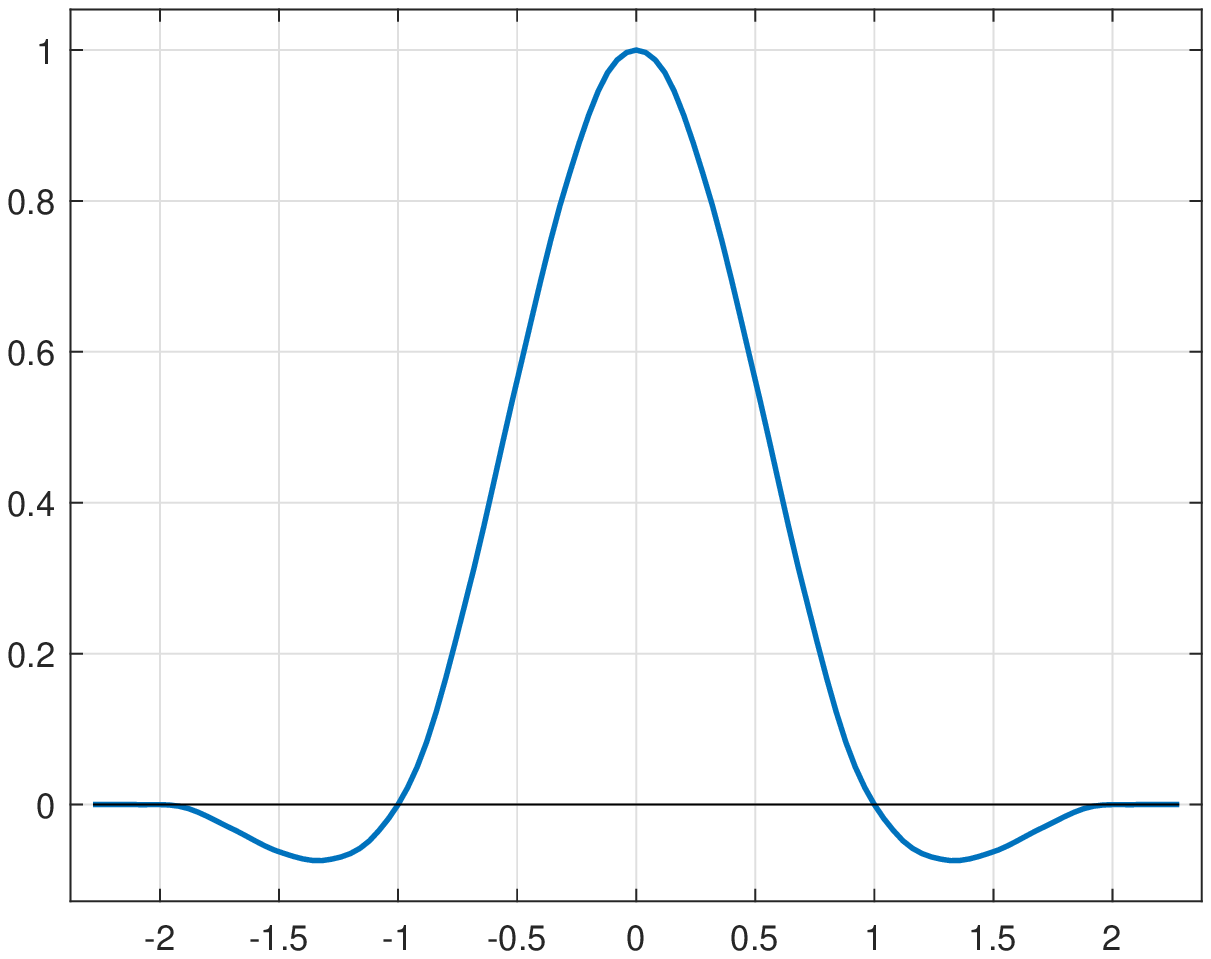}
	\caption{The basic limit function of the $5$-ary scheme corresponding to the mask in \eqref{eq:5ary_mask} with $w=-1.4$, which is supported in $[-2.375,2.375]$, belongs to $\mathcal{C}^{2.2044}(\mathbb{R})$ and reproduces polynomials up to degree $2$.}
	\label{fig:5ary}
\end{figure}

\subsection{A Three-Parameter Family of $4$-ary Dual Interpolatory Subdivision Schemes}

For $m=4$ we present a three-parameter family of masks. As for the support and the polynomial reproduction we ask for very mild requirements. In particular, we search for schemes that reproduce quadratic polynomials and that have $|\supp(\varphi)|\leq7$, i.e. $k^*\leq11$. However we introduce a parameter from the beginning, taking as samples of $\varphi$ over $\mathbb{Z}/2$, a convex combination between the values of the Dubuc-Deslauriers $4$-point scheme and the Dubuc-Deslauriers $6$-point scheme, i.e.
\begin{equation} \label{eq:DD4_DD6_samp}
	\scalebox{0.85}{$ \left\{ \varphi \left( \frac{k}{2} \right) \right\}_{k=-5}^{5}\;=\;
\left \{
		\frac{3w}{256}, \, 0, \, -\frac{9w}{256}-\frac{1}{16}, \, 0, \, \frac{3w}{128}+\frac{9}{16}, \, 1, \, \frac{3w}{128}+\frac{9}{16}, \, 0, \, -\frac{9w}{256}-\frac{1}{16}, \, 0, \, \frac{3w}{256}
	\right \},\quad w\in[0,1].$}
\end{equation}
\newpage
The solutions of the resulting linear system depend on two more parameters $v$ and $u$, giving rise to a family of masks $\{a_k\}_{k=-10}^{11}$ where
\begin{equation} \label{eq:4ary_mask}
\scalebox{0.88}{$\begin{array}{rcl}
	a_{-10}\;=\;a_{11}&=&-\frac{w\big[12(5w+8)v+4(9w + 16)u-3(155w + 48)\big]}{1024(w+24)(3w+4)},\\ \\
	a_{-9}\;=\;a_{10}&=&-\frac{(9w+16)\left[12(5w+8)v+4(9w + 16)u-3(155w + 48)\right]}{3072(w+24)(3w+4)},\\ \\
	a_{-8}\;=\;a_{9}&=&-\frac{6v+4u-9}{128},\\ \\
	a_{-7}\;=\;a_{8}&=&\frac{v}{64},\\ \\
	a_{-6}\;=\;a_{7}&=&\frac{12(63w^2 - 376w - 6784)v+4(99w^2 - 1344w - 20480)u-3(2307w^2 + 176w - 61440)}{3072(w+24)(3w+4)},\\ \\
	a_{-5}\;=\;a_{6}&=&\frac{12(117w^2 + 904w + 7168)v+4(225w^2 + 2352w + 21248)u-3(3633w^2 + 18112w + 75264)}{3072(w+24)(3w+4)},\\ \\
	a_{-4}\;=\;a_{5}&=&\frac{8v+6u-17}{64},\\ \\
	a_{-3}\;=\;a_{4}&=&\frac{u}{32},\\ \\
	a_{-2}\;=\;a_{3}&=&-\frac{3(27w^2 - 712w - 10240)v+(27w^2 - 2304w - 30848)u-3(441w^2 + 2102w - 19968)}{384(w+24)(3w+4)},\\ \\
	a_{-1}\;=\;a_{2}&=&-\frac{3(11w^2 + 296w + 3456)v+(27w^2 + 880w + 10368)u-(453w^2 + 9530w + 33888)}{128(w+24)(3w+4)},\\ \\
	a_{0}\;=\;a_{1}&=&-\frac{3(4v+4u-51)}{128}.
	\end{array}$}
\end{equation}

This family includes the two subdivision schemes presented in \cite{LUCIA}, which are obtained by selecting
\[
	w\;=\;0,\quad v\;=\;4\theta, \quad u\;=\;\frac{9}{4}\;-\;6\theta,
\]
%$$
%	w\;=\;1, \quad
%	v\;=\;\frac{107}{3} \alpha + \frac{4313}{672}, \quad
%	u\;=\;-\frac{866}{25} \alpha - \frac{4393}{1120},
%$$
%with
%$$
%	\alpha\;=\;\frac{-\theta (520192 \theta^2 + 152160 \theta + 67)}{12 (32 \theta + 5) (64 \theta + 1)},
%$$
and
$$
	w\;=\;\frac{256}{3} \theta, \quad
	%u=\frac{9(-127 w^2+54 w+3728)}{256(3w+40)}, \quad
	%v=\frac{3(381w^2+246w-5744)}{512(3w+40)}
	v\;=\;\frac{520192 \theta^2 + 3936 \theta - 1077}{256(32 \theta + 5)}, \quad
	u\;=\;\frac{-(520192 \theta^2 - 2592 \theta - 2097)}{128(32 \theta + 5)},
$$
respectively. Thus they both turn out to be written in terms of a single free parameter $\theta$.
The first reproduces quadratic polynomials whereas the second reproduces cubic polynomials.

Note that the choice
\begin{equation} \label{eq:4ary_3rep}
	v\;=\;\frac{3(381w^2+246w-5744)}{512(3w+40)}
	\quad\textrm{ and }\quad
	u\;=\;\frac{9(-127w^2+54w+3728)}{256(3w+40)},
\end{equation}
guarantees cubic polynomial reproduction for any value of $w$. Instead, the reproduction of degree-$4$ polynomials is achieved only by one element of this family, the one having
\begin{equation} \label{eq:4ary_4rep}
	w\;=\;1,\quad v\;=\;-\frac{357}{512}\quad\textrm{ and }\quad u\;=\;\frac{765}{256},
\end{equation}
i.e. with the first half of the mask equal to
\begin{equation} \label{eq:5ary_mask_reg}
	\scalebox{0.88}{$\frac{1}{917504}\;\left \{
		2145, \, 17875, \, 8820, \, -9996, \, -39985, \, -127595, \, -66640, \, 85680, \, 325754, \, 739310, \, 899640 %899640 & 739310 & 325754 & 85680 & -66640 & -127595 & -39985 & -9996 & 8820 & 17875 & 2145
	\right \}.$}	
\end{equation}
As expected, the maximal polynomial reproduction is achieved when $w=1$, i.e. when the samples at $\mathbb{Z}/2$ are taken from the Dubuc-Deslauriers $6$-point scheme, which reproduces also polynomials of degree $5$. However this is not achieved by the scheme with mask in \eqref{eq:5ary_mask_reg}. To get the reproduction of degree $5$ polynomials one has to allow a wider support for $\varphi$, namely $|\supp(\varphi)|\geq \frac{29}{3} =9.\overline{6}$.

Within this family of schemes, we observe a good trade off between regularity and polynomial reproduction, similarly to what shown,  e.g., in \cite{MR2474706,MR3474348}. Indeed, if we compare the scheme having mask satisfying \eqref{eq:4ary_3rep} with $w=0$ (Figure \ref{fig:4ary} $(a)$) and the one having mask satisfying \eqref{eq:4ary_4rep} (Figure \ref{fig:4ary} $(b)$), which are at the boundary with respect to the schemes with mask \eqref{eq:4ary_mask} that reproduce cubic polynomials, we have that the first is $\mathcal{C}^{2.3043}(\mathbb{R})$ but does not reproduce polynomials of degree $4$, while the second reproduces polynomial of degree $4$ but it is only $\mathcal{C}^{1.5761}(\mathbb{R})$. From the point of view of the shape, both limit functions have some undesirable oscillations which are much less evident in the scheme \eqref{eq:4ary_3rep} with $w=\frac{1}{2}$ (Figure \ref{fig:4ary} $(c)$) which is $\mathcal{C}^{2.2247}(\mathbb{R})$ and reproduces cubic polynomials. Thus the latter could be a good candidate for design applications.

\begin{figure}[h!]
	\begin{minipage}{0.5\textwidth}
		\includegraphics[width=0.9\textwidth]{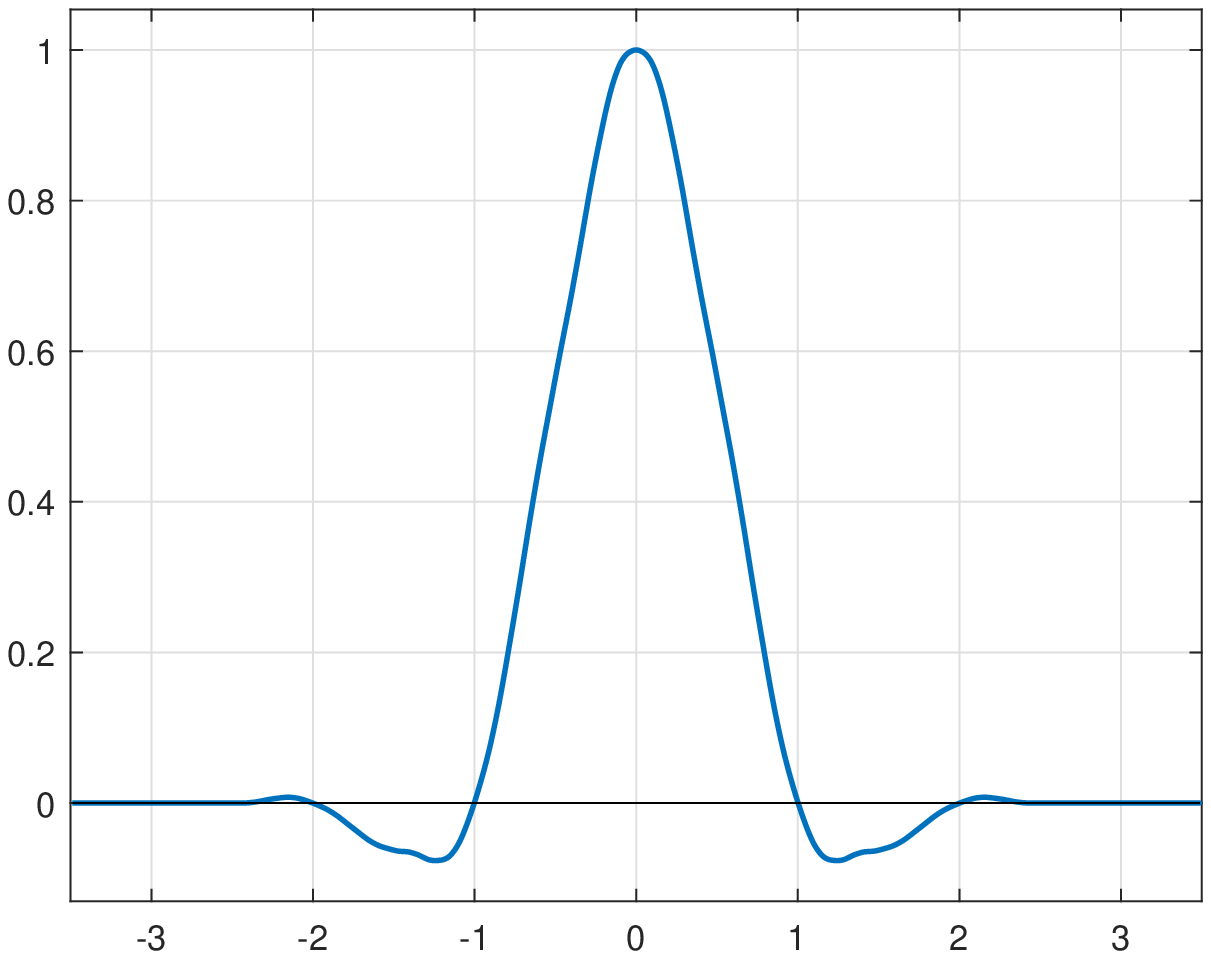}
		\begin{center}
			$(a)$
		\end{center}
	\end{minipage}
	\begin{minipage}{0.5\textwidth}
		\includegraphics[width=0.9\textwidth]{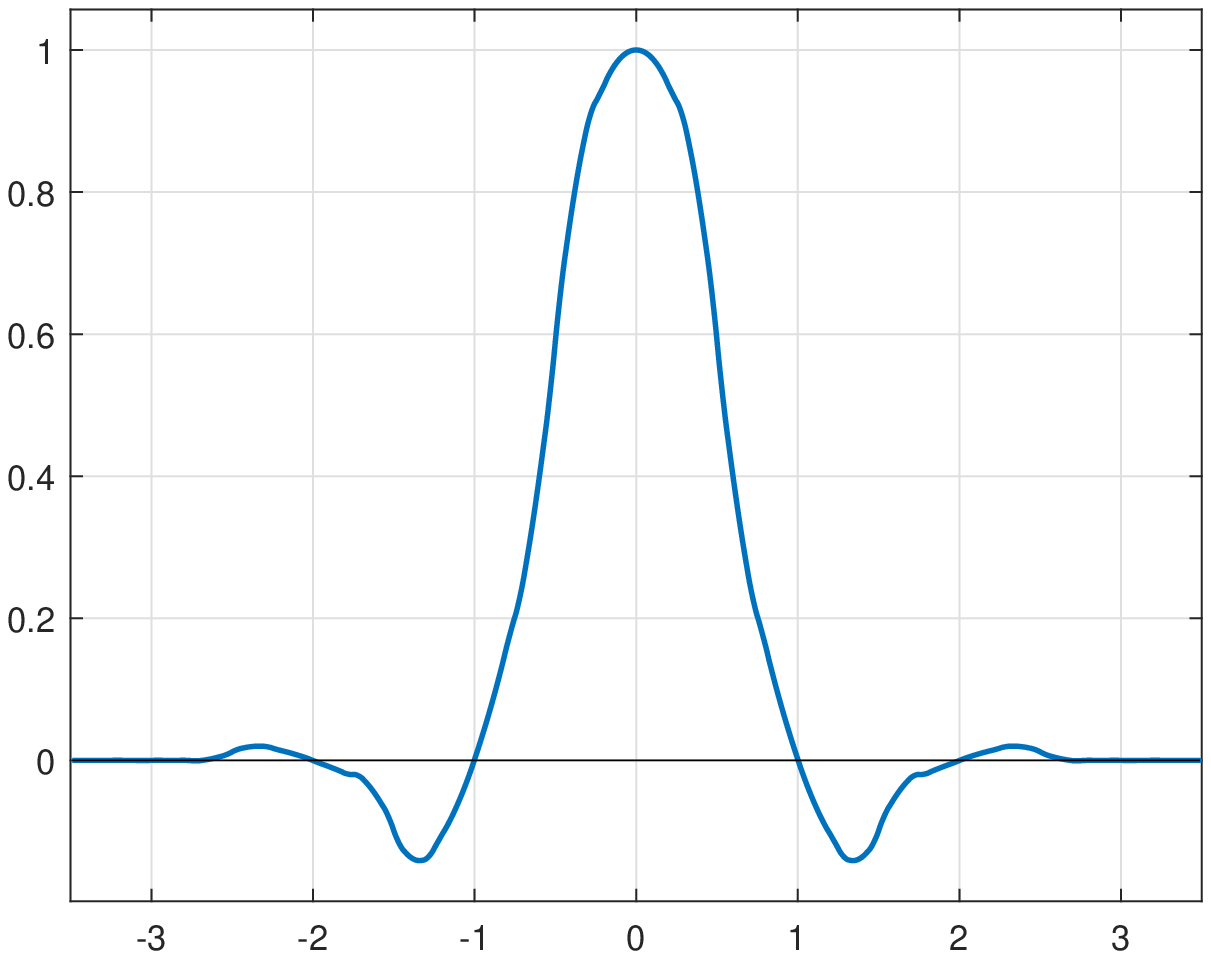}
		\begin{center}
			$(b)$
		\end{center}
	\end{minipage}
	$ $\\
	\begin{center}
		\includegraphics[width=0.45\textwidth]{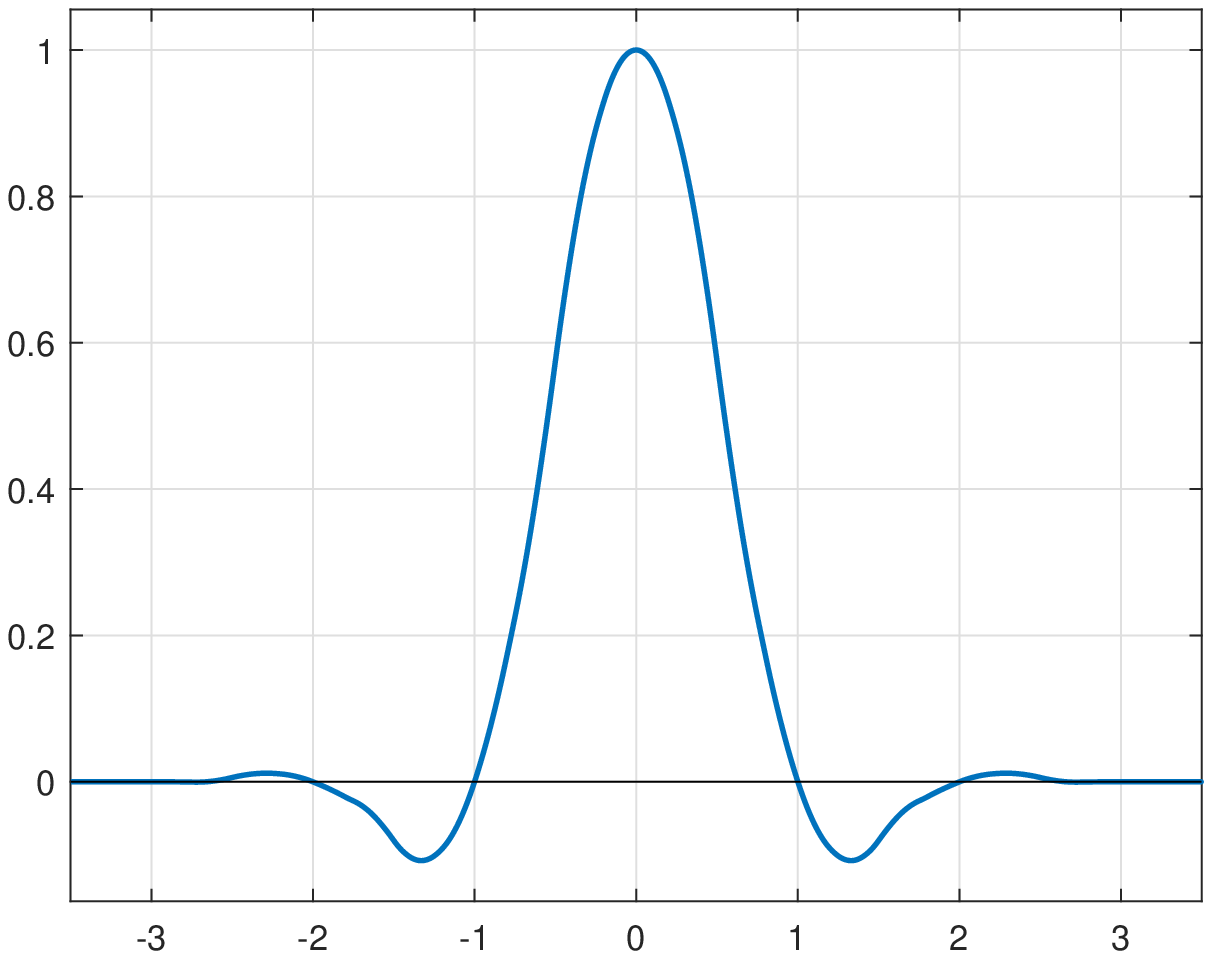}\\ $ $\\
		$\quad(c)$
	\end{center}
	\caption{$(a)$ The basic limit function of the $4$-ary dual interpolatory scheme corresponding to the mask in \eqref{eq:4ary_mask} satisfying \eqref{eq:4ary_3rep} with $w=0$, which is supported in $\left[- \frac{19}{6}, \frac{19}{6} \right]$, belongs to $\mathcal{C}^{2.3043}(\mathbb{R})$
	%{\color{red}(precisely, $\mathcal{C}^{2.304316035190218}(\mathbb{R})$)}
	and reproduces polynomials up to degree $3$. $(b)$ The basic limit function of the $4$-ary dual interpolatory scheme corresponding to the mask in \eqref{eq:4ary_mask} satisfying \eqref{eq:4ary_4rep}, which is supported in $[-3.5,3.5]$, belongs to $\mathcal{C}^{1.5761}(\mathbb{R})$ %{\color{red}(precisely, $\mathcal{C}^{1.576107779765425}(\mathbb{R})$)}
	and reproduces polynomials up to degree $4$. $(c)$ The basic limit function of the $4$-ary dual interpolatory scheme corresponding to the mask in \eqref{eq:4ary_mask} satisfying \eqref{eq:4ary_3rep} with $w=\frac{1}{2}$, which is supported in $[-3.5,3.5]$, belongs to $\mathcal{C}^{2.2299}(\mathbb{R})$ %{\color{red}(precisely, $\mathcal{C}^{2.229944735900411}(\mathbb{R})$)}
	and reproduces polynomials up to degree $3$.}
	\label{fig:4ary}
\end{figure}

\begin{figure}[h!]
	\centering
	\includegraphics[width=0.4\textwidth]{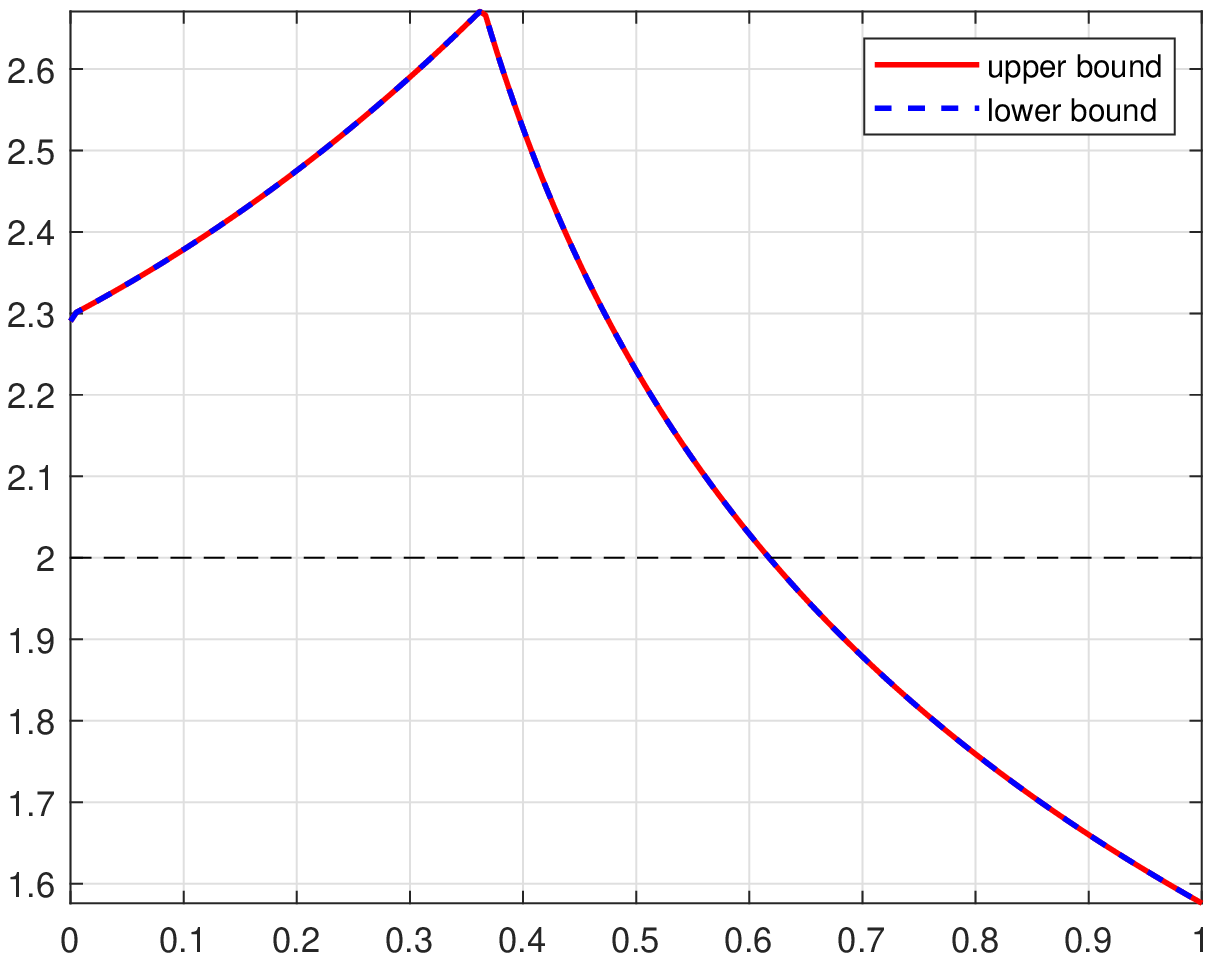}
	\caption{Regularity analysis via joint spectral radius of the family of $4$-ary masks in \eqref{eq:4ary_mask} with \eqref{eq:4ary_3rep} and $w \in [0, 1]$. The maximum regularity is approximately $2.6667$ achieved around $w=\frac{11}{30}$.}
	\label{fig:4ary_reg_jsr}
\end{figure}

\subsection{Qualitative Comparison}

We end with a direct comparison of the three best looking schemes among the ones introduced before, i.e., the ones whose basic limit functions are depicted in Figure \ref{fig:3ary}, \ref{fig:5ary} and \ref{fig:4ary} $(c)$ respectively. The test here is the interpolation of the vertices of a square and of a control polygon which presents different kind of angles (Figure \ref{fig:confront}). %As expected by the plot of the basic limit function in Figure \ref{fig:4ary} $(b)$ and its low regularity, the scheme with mask \eqref{eq:4ary_4rep} gives the least good looking curve. This even if the scheme is the one with the highest degree of polynomial reproduction among the one considered. On the other hand, the most good looking curve is given by the $5$-ary scheme of Figure \ref{fig:5ary}, which, by contrast, is the one that reproduces only quadratic polynomials.
All these schemes are $\mathcal{C}^2(\mathbb{R})$. The $5$-ary scheme is the only one that does not reproduce cubic polynomials; however is the one that in both examples fits the control points with smaller oscillations.

\begin{figure}[h!]
	\centering
	\includegraphics[scale=0.69]{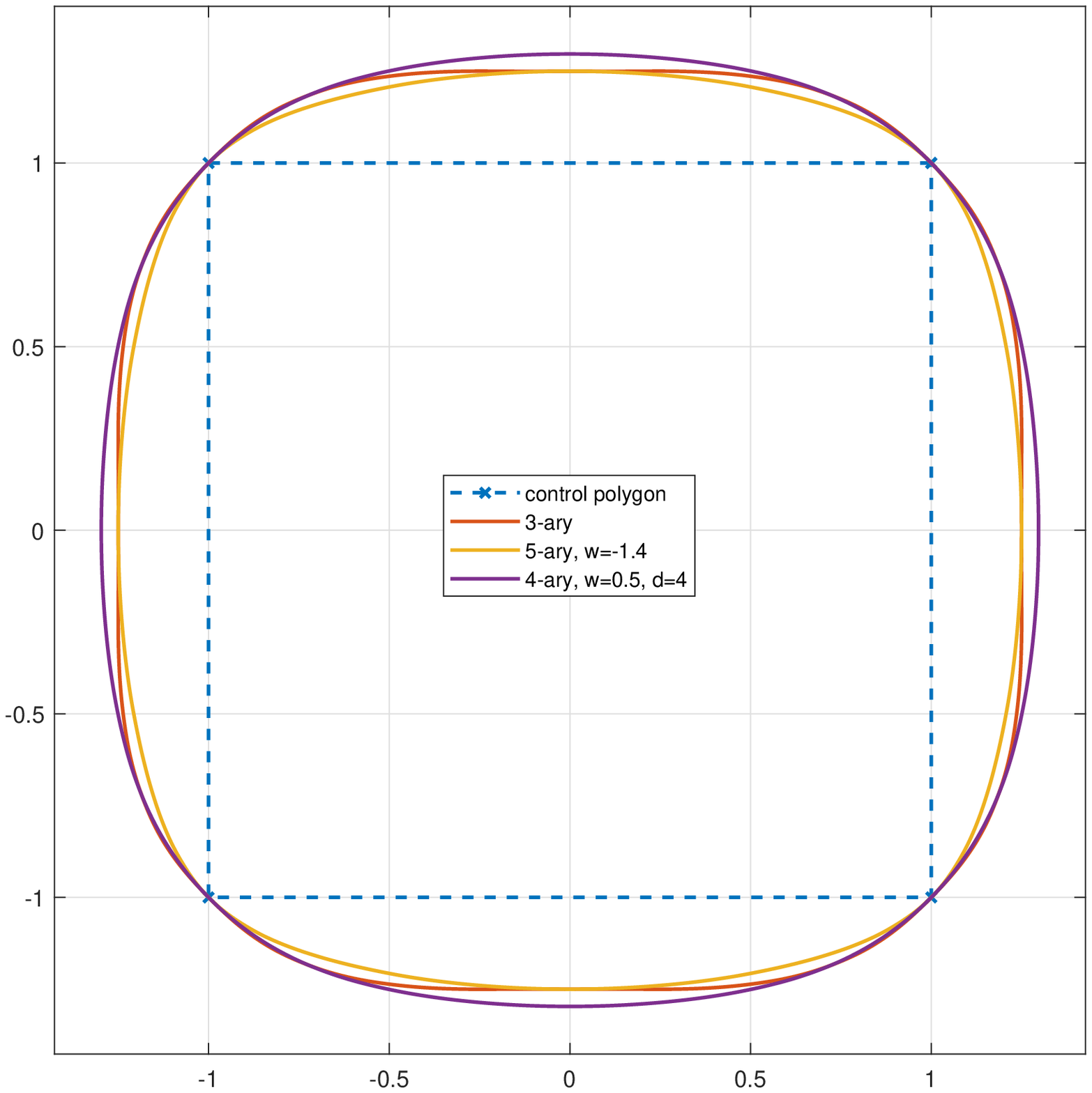}
	$ $\\ $ $\\
	\includegraphics[width=\textwidth]{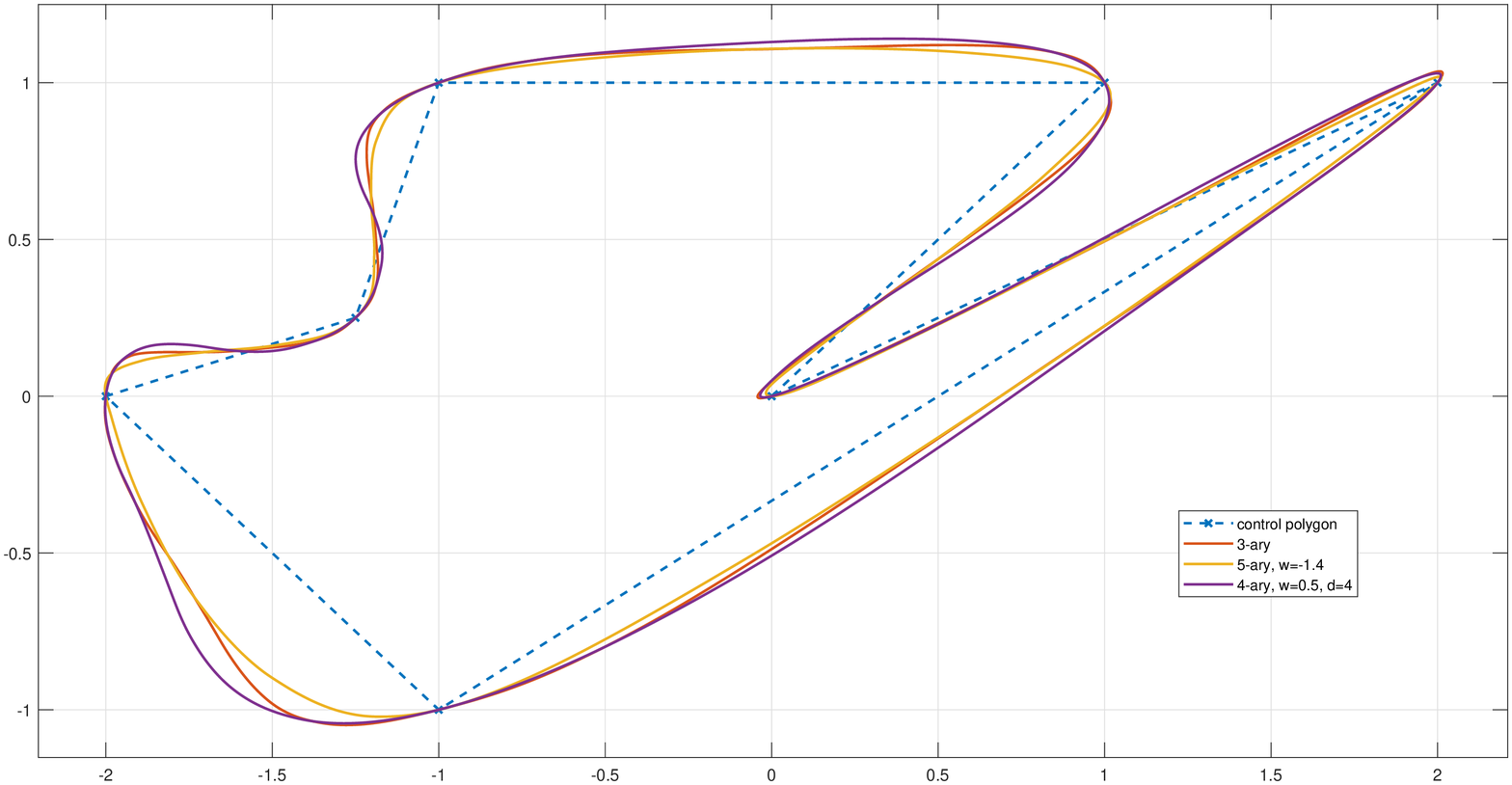}
	\caption{Comparison of the interpolations of the square and of a polygon with different angles given by the schemes related to the masks \eqref{eq:3ary_mask}, \eqref{eq:5ary_mask} with $w=-1.4$ and \eqref{eq:4ary_mask} with \eqref{eq:4ary_3rep} and $w=0.5$.}
\label{fig:confront}
\end{figure}

%\begin{figure}
%	\includegraphics[width=\textwidth]{confront2.eps}
%	\label{fig:confront2}
%\end{figure}
%
%\begin{figure}
%	\includegraphics[width=\textwidth]{confront3.eps}
%	\label{fig:confront3}
%\end{figure}

%%%%%%%%%%%%%%%%%%%%%%%%%%%%%%%%%%%%%%%%%%%%%%%%%%%%%%%%%%%%%%%%%%%%%%%%%%%%%%%%%%%%%%%%%%%%%%%%%%
\section{Conclusions and Future Works} \label{sec:concl}
%%%%%%%%%%%%%%%%%%%%%%%%%%%%%%%%%%%%%%%%%%%%%%%%%%%%%%%%%%%%%%%%%%%%%%%%%%%%%%%%%%%%%%%%%%%%%%%%%%

With this work, we open the door to an ignored room in the palace of subdivision, introducing a complete characterization of the symbols of dual univariate interpolatory schemes. From a theoretical point of view, this completes the well established theory of primal interpolatory schemes, while providing interesting tools for applications as well. From this point on, several directions of investigation start, such as the search for closed form of the symbols for subfamilies of dual interpolatory scheme (e.g. as the one existing for the Dubuc-Deslauriers primal interpolatory scheme), the analysis of the curvature, the study of the relationship with wavelet and frames and the design of bivariate dual interpolatory subdivision schemes.

%{\color{red}
%\begin{itemize}
%	\item Even more general (non-symmetric, with interpolation over $\mathbb{Z}+\tau$) interpolatory schemes?
%	\item New families of pseudo-splines based on these schemes, maybe starting from dual B-splines? (Luca)
%	\item Can we know something in advance about the solution of the constructing linear system like the number of parameters necessary to describe them? (Luca)
%	\item Wavelet tight frame construction: is it possible to exploit \eqref{eq:dual_char_even} and \eqref{eq:dual_char_odd} to get a fast UEP/OEP construction? (Maria)
%	\item Are the symbols of these schemes the modulus squared of some families of wavelets as in the primal case or not? (Maria)
%	\item Can we extend some of these schemes (the most eligible being the ternary dual $4$-point) to the bivariate setting around an extraordinary vertex? Maybe dual $\sqrt{2}$  subdivision? (Maria)
%\end{itemize}
%}

%%%%%%%%%%%%%%%%%%%%%%%%%%%%%%%%%%%%%%%%%%%%%%%%%%%%%%%%%%%%%%%%%%%%%%%%%%%%%%%%%%%%%%%%%%%%%%%%%%
\section*{Acknowledgements}
%%%%%%%%%%%%%%%%%%%%%%%%%%%%%%%%%%%%%%%%%%%%%%%%%%%%%%%%%%%%%%%%%%%%%%%%%%%%%%%%%%%%%%%%%%%%%%%%%%
This research has been accomplished within the \emph{Research ITalian network on Approximation} (RITA).
The authors are members of the INdAM Research group GNCS, which has partially supported this work.

%%%%%%%%%%%%%%%%%%%%%%
%%%%%%%%%%%%%%%%%%%%%%
%\section*{References}
%%%%%%%%%%%%%%%%%%%%%%
%%%%%%%%%%%%%%%%%%%%%%

\bibliographystyle{siam}
%\bibliography{C:/Users/visca/Dropbox/Bibliografia}
%\bibliography{Biblio_dual_interp}
\bibliography{dual_interpolatory_schemes_v3.bbl}

\begin{thebibliography}{10}

\bibitem{MR1420504}
{\sc L.~Auslander and Y.~Meyer}, {\em A generalized {P}oisson summation
  formula}, Appl. Comput. Harmon. Anal., 3 (1996), pp.~372--376.

\bibitem{MR1079033}
{\sc A.~S. Cavaretta, W.~Dahmen, and C.~A. Micchelli}, {\em Stationary
  subdivision}, Mem. Amer. Math. Soc., 93 (1991).

\bibitem{MR3886713}
{\sc M.~Charina and T.~Mejstrik}, {\em Multiple multivariate subdivision
  schemes: matrix and operator approaches}, J. Comput. Appl. Math., 349 (2019),
  pp.~279--291.

\bibitem{CHOI2006351}
{\sc S.~W. Choi, B.-G. Lee, Y.~J. Lee, and J.~Yoon}, {\em Stationary
  subdivision schemes reproducing polynomials}, Computer Aided Geometric
  Design, 23 (2006), pp.~351 -- 360.

\bibitem{MR1968118}
{\sc C.~K. Chui, W.~He, J.~St{\"o}ckler, and Q.~Sun}, {\em Compactly supported
  tight affine frames with integer dilations and maximum vanishing moments},
  Adv. Comput. Math., 18 (2003), pp.~159--187.

\bibitem{MR3474348}
{\sc C.~Conti, L.~Gemignani, and L.~Romani}, {\em Exponential pseudo-splines:
  looking beyond exponential {B}-splines}, J. Math. Anal. Appl., 439 (2016),
  pp.~32--56.

\bibitem{MR2775138}
{\sc C.~Conti and K.~Hormann}, {\em Polynomial reproduction for univariate
  subdivision schemes of any arity}, J. Approx. Theory, 163 (2011),
  pp.~413--437.

\bibitem{MR2843037}
{\sc C.~Conti and L.~Romani}, {\em Algebraic conditions on non-stationary
  subdivision symbols for exponential polynomial reproduction}, J. Comput.
  Appl. Math., 236 (2011), pp.~543--556.

\bibitem{MR3071114}
\leavevmode\vrule height 2pt depth -1.6pt width 23pt, {\em Dual univariate
  {$m$}-ary subdivision schemes of de {R}ham-type}, J. Math. Anal. Appl., 407
  (2013), pp.~443--456.

\bibitem{MR1971300}
{\sc I.~Daubechies, B.~Han, A.~Ron, and Z.~Shen}, {\em Framelets: {MRA}-based
  constructions of wavelet frames}, Appl. Comput. Harmon. Anal., 14 (2003),
  pp.~1--46.

\bibitem{MR1790328}
{\sc J.~M. de~Villiers, C.~A. Micchelli, and T.~Sauer}, {\em Building refinable
  functions from their values at integers}, Calcolo, 37 (2000), pp.~139--158.

\bibitem{DENG2019344}
{\sc C.~Deng, H.~Xu, W.~Ma, and Y.~Li}, {\em Repeated local operations and
  associated interpolation properties of dual 2n-point subdivision schemes},
  Journal of Computational and Applied Mathematics, 349 (2019), pp.~344 -- 353.

\bibitem{Deng2010137}
{\sc C.~Deng and X.~Yang}, {\em A simple method for interpolating meshes of
  arbitrary topology by {C}atmull-{C}lark surfaces}, Visual Computer, 26
  (2010), pp.~137--146.
\newblock cited By 10.

\bibitem{MR982724}
{\sc G.~Deslauriers and S.~Dubuc}, {\em Symmetric iterative interpolation
  processes}, Constr. Approx., 5 (1989), pp.~49--68.

\bibitem{MR1184153}
\leavevmode\vrule height 2pt depth -1.6pt width 23pt, {\em Continuous iterative
  interpolation processes}, in European {C}onference on {I}teration {T}heory
  ({B}atschuns, 1989), World Sci. Publ., River Edge, NJ, 1991, pp.~71--78.

\bibitem{MR2195181}
{\sc O.~Dovgoshey, O.~Martio, V.~Ryazanov, and M.~Vuorinen}, {\em The {C}antor
  function}, Expo. Math., 24 (2006), pp.~1--37.

\bibitem{MR1172120}
{\sc N.~Dyn}, {\em Subdivision schemes in computer-aided geometric design}, in
  Advances in numerical analysis, {V}ol. {II} ({L}ancaster, 1990), Oxford Sci.
  Publ., Oxford Univ. Press, New York, 1992, pp.~36--104.

\bibitem{MR2474706}
{\sc N.~Dyn, K.~Hormann, M.~A. Sabin, and Z.~Shen}, {\em Polynomial
  reproduction by symmetric subdivision schemes}, J. Approx. Theory, 155
  (2008), pp.~28--42.

\bibitem{MR2008967}
{\sc N.~Dyn and D.~Levin}, {\em Subdivision schemes in geometric modelling},
  Acta Numer., 11 (2002), pp.~73--144.

\bibitem{MR3009529}
{\sc N.~Guglielmi and V.~Protasov}, {\em Exact computation of joint spectral
  characteristics of linear operators}, Found. Comput. Math., 13 (2013),
  pp.~37--97.

\bibitem{MR1879678}
{\sc M.~F. Hassan, I.~P. Ivrissimitzis, N.~A. Dodgson, and M.~A. Sabin}, {\em
  An interpolating {$4$}-point {$C^2$} ternary stationary subdivision scheme},
  Comput. Aided Geom. Design, 19 (2002), pp.~1--18.

\bibitem{THOMAS2}
{\sc T.~Mejstrik}, {\em Improved invariant polytope algorithm and
  applications}, arXiv:1812.03080v2,  (2018).

\bibitem{MR1397613}
{\sc C.~A. Micchelli}, {\em Interpolatory subdivision schemes and wavelets}, J.
  Approx. Theory, 86 (1996), pp.~41--71.

\bibitem{MR3702925}
{\sc G.~Muntingh}, {\em Symbols and exact regularity of symmetric
  pseudo-splines of any arity}, BIT, 57 (2017), pp.~867--900.

\bibitem{MR2415757}
{\sc J.~Peters and U.~Reif}, {\em Subdivision surfaces}, vol.~3 of Geometry and
  Computing, Springer-Verlag, Berlin, 2008.

\bibitem{LUCIA}
{\sc L.~Romani}, {\em Interpolating $m$-refinable functions with compact
  support: the second generation class}, Applied Mathematics and Computation,
  under second review,  (2019).

\bibitem{MR3921224}
{\sc L.~Romani}, {\em Local cardinal interpolation by {$C^2$} cubic
  {B}2-splines with a tunable shape parameter}, Appl. Math. Lett., 94 (2019),
  pp.~13--20.

\bibitem{Warren:2001:SMG:580358}
{\sc J.~Warren and H.~Weimer}, {\em Subdivision Methods for Geometric Design: A
  Constructive Approach}, Morgan Kaufmann Publishers Inc., San Francisco, CA,
  USA, 1st~ed., 2001.

\bibitem{Zheng2006301}
{\sc J.~Zheng and Y.~Cai}, {\em Interpolation over arbitrary topology meshes
  using a two-phase subdivision scheme}, IEEE Transactions on Visualization and
  Computer Graphics, 12 (2006), pp.~301--310.
\newblock cited By 29.

\end{thebibliography}
\end{document}